\newcommand{\CC}{\mathbb C}
\newcommand{\FF}{\mathbb F}
\newcommand{\PP}{\mathbb P}
\newcommand{\QQ}{\mathbb Q}
\newcommand{\ZZ}{\mathbb Z}
\DeclareMathOperator{\im}{im}
\DeclareMathOperator{\Proj}{Proj} 
 \DeclareMathOperator{\Spec}{Spec}
\DeclareMathOperator{\Syz}{Syz}
\newtheorem{proposition}{Proposition}[section]
\newtheorem{theorem}[proposition]{Theorem}
\newtheorem{lemma}[proposition]{Lemma}
\theoremstyle{remark}
\newtheorem{definition}[proposition]{Definition}
\newtheorem{remark}[proposition]{Remark}
\newtheorem{example}[proposition]{Example}
\newtheorem{question}[proposition]{Problem}
\newcommand  {\shF}     {\mathcal{F}}
\newcommand  {\shL}     {\mathcal{L}}
\newcommand  {\shS}     {\mathcal{S}}
\newcommand  {\shT}     {\mathcal{T}}
\newcommand  {\lra}     {\longrightarrow}
\newcommand  {\ra}      {\rightarrow}
\newcommand{\comdots}{ , \ldots , }
\newcommand{\plusdots}{ + \ldots + }
\newcommand  {\dual}    {\vee}
\newcommand  {\fom}     {\mathfrak{m}}
\numberwithin{equation}{section}
\renewcommand{\O}{\mathcal{O}}
\newcommand{\repmatrix}{{\alpha}}
\newcommand{\torsor}{{T}}
\newcommand{\solidclosure}{\rm sc}
\newcommand{\rk}{{\operatorname{rk} }}
\newcommand{\forca}{{A}}
\newcommand{\forcazero}{{B}}
\newcommand{\fop}{{\mathfrak p}}
\def\mydate{\number\day\space\ifcase\month \or January\or February\or March\or April\or May\or
June\or July\or August\or September\or October\or November\or
December\fi \space\number\year}
\begin{document}

\sloppy

\title[Forcing algebras, syzygy bundles, and tight closure]
{Forcing algebras, syzygy bundles, and tight closure}

\author[Holger Brenner]{Holger Brenner}
\address{Fachbereich f\"ur Mathematik und Informatik, Universit\"at Osnabr\"uck, Osnabr\"uck, Germany}

\email{hbrenner@uni-osnabrueck.de}

% Remove or comment out any unused author tags.
% author one information

\thanks{ }

\subjclass{}
%\date{}

% at present the "communicated by" line appears only in ERA and PROC
%\commby{}

\dedicatory{Preliminary version,  \mydate}

\maketitle

\noindent Mathematical Subject Classification (2000): 13A35; 14J60

\begin{abstract}
We give a survey about some recent work on tight closure and Hilbert-Kunz theory from the viewpoint of vector
bundles. This work is based in understanding tight closure in terms
of forcing algebras and the cohomological dimension of torsors of
syzygy bundles.
These geometric methods allowed to answer some fundamental questions of tight closure,
in particular the equality between tight closure and plus closure in
graded dimension two over a finite field and the rationality of the
Hilbert-Kunz multiplicity in graded dimension two.
Moreover, this approach showed that tight closure may behave weirdly under arithmetic and geometric deformations,
and provided a negative answer to the localization problem.
\end{abstract}

%\tableofcontents

%===========================================================
\section*{Introduction}

In this survey article we describe some developments which led to a
detailed geometric understanding of tight closure in dimension two
in terms of vector bundles and torsors. Tight closure is a technique in positive characteristic introduced by M. Hochster and C. Huneke 20 years ago (\cite{hochsterhunekebriancon}, \cite{hunekeapplication}). We recall its definition. Let $R$ be a commutative ring of positive characteristic $p$ with $e$th \emph{Frobenius} homomorphism $F^{e}:R \rightarrow R$, $f \mapsto f^q$, $q=p^{e}$. For an ideal $I$ let $I^{[q]}:=F^{e}(I)$ be the extended ideal under the $e$th Frobenius. Then the \emph{tight closure} of $I$ is given by
$$I^*=\{f \in R:\, \text{there exists } t, \text{not in any minimal prime}, $$
$$\text{ such that } tf^q \in I^{[q]} \text{ for } q \gg 0 \,  \}
$$
(in the domain case this means just $t \neq 0$, and for all $q$). In this paper we will not deal with the applications of tight closure in commutative algebra, homological algebra and algebraic geometry, but with some of its intrinsic problems. One of them is whether tight closure commutes with localization, that is, whether for a multiplicative system $S \subseteq R$ the equality
$$(I^*)R_S =(IR_S)^*$$
holds (the inclusion $\subseteq$ is always true). A directly related question is whether tight closure is the same as plus closure. The \emph{plus closure} of an ideal $I$ in a domain $R$ is defined to be
$$
I^+ \!=\!\{f \in R\! : \text{there exists } R \subseteq S \text{ finite domain extension such that } f \in IS\} .$$
This question is known as the \emph{tantalizing question} of tight closure theory. The inclusion $I^+ \subseteq I^*$ always holds. Since the plus closure commutes with localization, a positive answer to the tantalizing question for a ring and all its localizations implies a positive answer for the localization problem for this ring. The tantalizing question is a problem already in dimension two, the localization problem starts to get interesting in dimension three.

What makes these problems difficult is that there are no exact criteria for tight closure. There exist many important inclusion criteria for tight closure, and in all these cases the criteria also hold for plus closure (in general, with much more difficult proofs). The situation is that the heartlands of ``tight closure country'' and of ``non tight closure country'' have been well exploited, but not much is known about the thin line which separates them. This paper is about approaching this thin line geometrically.

The original definition of tight closure, where one has to check infinitely many conditions, is difficult to apply. The starting point of the work we are going to present here is another description of tight closure due to Hochster as \emph{solid closure} based on the concept of \emph{forcing algebras}. Forcing algebras were introduced by Hochster in \cite{hochstersolid} in  an attempt
to build up a characteristic-free closure operation with similar
properties as tight closure. This approach rests on the fact that $f \in (f_1, \ldots ,f_n)^*$ holds in $R$ if and only if $H^{\dim R}_{\fom} (A) \neq 0$, where $A=R[T_1, \ldots, T_n]/(f_1T_1+ \ldots + f_nT_n -f)$ is the forcing algebra for these data (see Theorem \ref{solidtight} for the exact statement). This gives a new interpretation for tight closure, where, at least at first glance, not infinitely many conditions are involved. This cohomological interpretation can be refined geometrically, and the goal of this paper is to describe how this is done and where it leads to. We give an overview.

We will describe the basic properties of forcing algebras in Section \ref{forcing}. A special feature of the cohomological condition for tight closure is that it depends only on the open subset $D(\fom A) \subseteq \Spec A$. This open subset is a ``torsor'' over $D(\fom) \subseteq \Spec R$, on which the \emph{syzygy bundle} $\Syz(f_1, \ldots , f_n)$ acts. This allows a more geometric view of the situation (Section \ref{torsor}). In general, closure operations for ideals can be expressed with suitable properties of forcing algebras. We mention some examples of this correspondence in Section \ref{closure} and come back to tight closure and solid closure in Section \ref{sectiontightsolid}. 

To obtain a detailed understanding, we restrict in Section \ref{gradedsection} to the situation of a two-dimensional standard-graded normal domain $R$ over an algebraically closed field and homogeneous $R_+$-primary ideals. In this setting, the question about the cohomological dimension is the question whether a torsor coming from forcing data is an affine scheme. Moreover, to answer this question we can look at the corresponding torsor over the smooth projective curve $\Proj R$. This translates the question into a projective situation. In particular, we can then use concepts from algebraic geometry like \emph{semistable bundles} and the \emph{strong Harder-Narasimhan filtration} to prove results. We obtain an exact numerical criterion for tight closure in this setting (Theorems \ref{ssinclusion} and \ref{ssexclusion}). The containment in the plus closure translates to a geometric condition for the torsors on the curve, and in the case where the base field is the algebraic closure of a finite field we obtain the same criterion. This implies that under all these assumptions, tight closure and plus closure coincide (Theorem \ref{tantalizingtheorem}).

With this geometric approach also some problems in Hilbert-Kunz theory could be solved, in particular it was shown that the \emph{Hilbert-Kunz multiplicity} is a rational number in graded dimension two (Theorem \ref{hkformula}). In fact, there is a formula for it in terms of the strong Harder-Narasimhan filtration of the syzygy bundle. In Section \ref{deformation}, we change the setting and look at families of two-dimensional rings parametrized by a one-dimensional base. Typical bases are $\Spec \mathbb Z$ (\emph{arithmetic deformations}) or $\mathbb A^1_K$ (\emph{geometric deformations}). Natural questions are how tight closure, Hilbert-Kunz multiplicity and strong semistability of bundles vary in such a family. Examples of P. Monsky already showed that the Hilbert-Kunz multiplicity behaves ``weirdly'' in the sense that it is not almost constant. It follows from the geometric interpretation that also strong semistability behaves wildly. Further extra work is needed to show that tight closure also behaves wildly under such a deformation. We present the example of Brenner-Katzman in the arithmetic case and of Brenner-Monsky in the geometric case (Examples \ref{brennerkatzmanexample} and \ref{brennermonskyexample}). The latter example shows also that tight closure does not commute with localization and that even in the two-dimensional situation, the tantalizing question has a negative answer, if the base field contains a transcendental element. We close the paper with some open problems (Section \ref{problems}).

As this is a survey article, we usually omit the proofs and refer to the original research papers and to \cite{brennerbarcelona}. I thank Helena Fischbacher-Weitz, Almar Kaid and Axel St\"abler for useful comments.
%I apologize that in this article I largely ignore my opinion that
%a survey article should not contain results of the author.

\section{Forcing algebras}
\label{forcing}

Let $R$ be a commutative ring, let $M$ be a finitely generated $R$-module
and $N \subseteq M$ a finitely generated $R$-submodule.
Let $s \in M$ be an element. The \emph{forcing algebra}
for these data is constructed as follows: choose generators $y_1
\comdots y_m$ for $M$ and generators $x_1 \comdots x_n$ for $N$.
This gives rise to a surjective homomorphism
$\varphi: R^m \rightarrow M $, a submodule $N' = \varphi^{-1}(N)$ and a
morphism $R^n \rightarrow R^m$ which sends $e_i$ to a preimage
$x_i'$ of $x_i$. Altogether we get the commutative diagram with exact rows

$$\xymatrix{ & & R^n &\! \! \!\stackrel{\repmatrix}{ \longrightarrow }\!\! \!  &
R^m &\! \! \! \longrightarrow \!\! \! & M/N & \!\! \! \longrightarrow \! \! \!& \! \! 0 & \! \! \!\!(*) \cr
& & \downarrow  & & \downarrow \varphi & & \downarrow = & &  &\cr
0\! \! &\! \! \!\!  \longrightarrow\! \! \! \! & N & \! \!\!\longrightarrow\!\! \! &M &\!\! \! \longrightarrow\! \! \! &M/N&\!\! \!\longrightarrow\! & 0 & } $$
($\repmatrix$ is a matrix). The element $s $ is represented by $(s_1 \comdots s_m) \in R^m$,
and $s $ belongs to $N$ if and only if the linear equation
$$ \repmatrix \begin{pmatrix} t_1 \cr . \cr . \cr . \cr t_n \end{pmatrix}
  = \begin{pmatrix} s_1 \cr . \cr . \cr . \cr s_m \end{pmatrix}  $$
has a solution. An important insight due to Hochster is that this
equation can be formulated with new variables $T_1 \comdots T_n$,
and then the ``distance of $s$ to $N$'' - in particular,
whether $s$ belongs to a certain closure of $N$ - is reflected by 
properties of the resulting (generic) forcing algebra. Explicitly,
if
$\repmatrix$ is the matrix describing the submodule $N$ as above and if
$(s_1 \comdots s_m)$ represents
$s$, then the forcing algebra is defined by
$$\forca=R[T_1 \comdots T_n]/(\repmatrix T-s) \, ,$$
where $\repmatrix T-s$ stands for
$$\repmatrix \begin{pmatrix} T_1 \cr . \cr . \cr . \cr T_n \end{pmatrix}
  = \begin{pmatrix} s_1 \cr . \cr . \cr . \cr s_m \end{pmatrix}  $$
or, in other words, for the system of \emph{inhomogeneous linear
equations}
$$ \begin{matrix}
 a_{11}T_1 & +& \ldots &+  &  a_{1n}T_n &=& s_1  \cr
a_{21}T_1 & + & \ldots &+ &  a_{2n}T_n &=& s_2  \cr
 & & & & \cr
 a_{m1}T_1 & +& \ldots &+ &  a_{mn}T_n &=& s_m
    \end{matrix}  \,.  $$
In the case of an ideal $I=(f_1 \comdots f_n)$ and $f \in R$ the
forcing algebra is just $\forca= R[T_1 \comdots T_n]/(f_1T_1
\plusdots f_nT_n -f)$. Forcing algebras are given by the easiest algebraic equations at
all, namely linear equations. Yet we will see that forcing algebras
already have a rich geometry. Of course, starting from the data
$(M,N,s)$ we had to make some choices in order to write down a
forcing algebra, hence only properties which are independent of
these choices are interesting.

The following lemma expresses the \emph{universal property} of a forcing
algebra.

\begin{lemma}
Let the situation be as above, and let $R \rightarrow R'$ be a ring homomorphism.
Then there exists an
$R$-algebra homomorphism $\forca \rightarrow R'$ if and only if $
s \otimes 1 \in \im (N \otimes R' \rightarrow M \otimes R')$.
\end{lemma}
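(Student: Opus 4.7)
The plan is to reduce the statement to a diagram chase by unpacking what an $R$-algebra homomorphism out of $A = R[T_1,\ldots,T_n]/(\alpha T - s)$ actually is. By the universal property of the polynomial ring modulo relations, giving an $R$-algebra map $A \to R'$ is the same as choosing elements $t_1, \ldots, t_n \in R'$ that satisfy the inhomogeneous linear system $\alpha t = (s_1, \ldots, s_m)$ in $R'^m$. In other words, such a homomorphism exists if and only if the element $(s_1, \ldots, s_m) \otimes 1 \in R'^m = R^m \otimes_R R'$ lies in the image of the base-changed map $\alpha \otimes R' \colon R'^n \to R'^m$.

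So the task reduces to identifying this image condition with the module-theoretic condition $s \otimes 1 \in \im(N \otimes R' \to M \otimes R')$. For this I would tensor the diagram $(*)$ from the paper with $R'$. Tensor product is right-exact, so the top row stays exact and becomes
\[ R'^n \stackrel{\alpha}{\longrightarrow} R'^m \longrightarrow (M/N) \otimes R' \longrightarrow 0, \]
and the bottom row yields an exact sequence $N \otimes R' \to M \otimes R' \to (M/N) \otimes R' \to 0$. The vertical map $\varphi \otimes R' \colon R'^m \to M \otimes R'$ is still surjective and still sends $(s_1, \ldots, s_m) \otimes 1$ to $s \otimes 1$, and the right-hand vertical map remains the identity on $(M/N) \otimes R'$.

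The key observation is then a two-line chase in this base-changed diagram. From the exact top row, $(s_1,\ldots,s_m)\otimes 1$ lies in $\im(\alpha\otimes R')$ if and only if it maps to zero in $(M/N)\otimes R'$. From the exact bottom row and commutativity, $s\otimes 1$ maps to zero in $(M/N)\otimes R'$ if and only if $s\otimes 1 \in \im(N\otimes R' \to M\otimes R')$. Since $\varphi\otimes R'$ sends $(s_1,\ldots,s_m)\otimes 1$ to $s\otimes 1$, and the right square commutes, these two vanishing conditions in $(M/N)\otimes R'$ agree, so the two "image" conditions are equivalent.

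I do not anticipate a substantive obstacle; the only thing one needs to be careful about is that tensor product fails to preserve injectivity, but this does not intervene here because only the right-exact parts of the rows (the images of $\alpha$ and of $N$, and the cokernel $M/N$) enter the argument. Working throughout on the level of cokernels rather than kernels is what makes the proof go through cleanly.
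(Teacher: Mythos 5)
Your proof is correct and is exactly the argument the paper has in mind: the paper's one-line proof ("this follows from right exactness of tensor products applied to $(*)$") is precisely your tensored diagram chase, with your opening identification of an $R$-algebra map $A \to R'$ with a solution of $\alpha t = s$ in $R'$ left implicit. No discrepancy.
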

\begin{proof}
This follows from the right exactness of tensor products applied to
the sequence $(*)$ above.
\end{proof}

The lemma implies in particular that for two forcing algebras
$\forca$ and $\forca'$ we have (not uniquely determined) $R$-algebra homomorphisms $\forca
\rightarrow \forca'$ and $\forca' \rightarrow \forca$. It also
implies that $s \in N$ if and only if there exists an
$R$-algebra homomorphism
$\forca \rightarrow R$ (equivalently, $\Spec \forca \rightarrow \Spec R$ has a section).

We continue with some easy geometric properties of the mapping
$\Spec \forca \rightarrow \Spec R $.
The formation of forcing algebras commutes with arbitrary base change
$R \rightarrow R'$. Therefore for every point $\fop \in \Spec R$ the
fiber ring $\forca \otimes_R \kappa(\fop)$ is the forcing algebra
given by $$\repmatrix (\fop) T= s(\fop) \, ,$$ which is an
inhomogeneous linear equation over the field $\kappa(\fop)$. Hence
the fiber of $\Spec \forca \rightarrow \Spec R$ over $\fop$ is the
\emph{solution set} to a system of linear inhomogenous equations.

We know from linear algebra that the solution set to such a system
might be empty, or it is an affine space (in the sense of linear
algebra) of dimension $\geq n-m$. Hence one should think of $\Spec
\forca \rightarrow \Spec R $ as a family of affine-linear spaces
varying with the base. Also, from linear algebra we know that such
a solution set is given by adding to one particular solution a
solution of the corresponding system of homogeneous of linear equations.
The solution set to $\repmatrix(\fop)T =0$ is a vector space over
$\kappa(\fop)$, and this solution set is the fiber over $\fop$ of
the forcing algebra of the zero element, namely
$$\forcazero = R[T_1
\comdots T_n]/(\repmatrix T) = R[T_1
\comdots T_n]/( \sum_{i=1}^n a_{ij} T_i,\, j=1 \comdots m)  \, .$$
As just remarked, the fibers of $V= \Spec \forcazero$ over a point
$\fop$
are vector spaces of possibly varying dimensions. Therefore $V$ is in
general not a vector bundle. It is, however, a commutative
\emph{group scheme} over $\Spec R$, where the addition is given by
$$V \times V \longrightarrow V,\, (s_1 \comdots s_n),(s_1' \comdots s_n')
\longmapsto (s_1 +s_1' \comdots s_n + s_n') $$
(written on the level of sections) and the coaddition by
$$ R[T_1
\comdots T_n]/(\repmatrix T)  \rightarrow R[T_1
\comdots T_n]/(\repmatrix T)  \otimes R[\tilde{T}_1
\comdots \tilde{T}_n]/(\repmatrix \tilde{T}) ,\, T_i \mapsto T_i + \tilde{T}_i . $$
This group scheme is the kernel group scheme of the group scheme
homomorphism
$$ \repmatrix: \mathbb A_R^n \longrightarrow \mathbb A_R^m \, $$
between the trivial additive group schemes of rank $n$ and $m$.
We call it the \emph{syzygy group scheme} for the given generators
of $N$.

The syzygy group scheme acts on the spectrum of a forcing algebra
$\Spec \forca$, $\forca = R[T]/(\repmatrix T - s)$ for every $s \in M$.
The action is exactly as in linear algebra, by adding a solution
of the system of homogeneous equations to a solution of the system
of inhomogeneous equations. An understanding of the syzygy group
scheme is necessary before we can understand the forcing algebras.

Although
$V$ is not a vector bundle in general, it is not too far away. Let
$U \subseteq \Spec R$ be the open subset of points $\fop$ where the mapping
$\repmatrix(\fop)$ is surjective. Then the restricted group scheme
$V|_U$ is a vector bundle of rank $n-m$. If $M/N$ has its support in a maximal ideal $\fom$,
then the syzygy group scheme induces a vector bundle on the \emph{punctured
spectrum}
$\Spec R - \{\fom\}$,
which we call the \emph{syzygy bundle}. Hence on $U$ we have a short
exact sequence
$$0 \lra \Syz \lra \O_U^n \lra \O_U^m \lra 0 \, \, \, $$
of vector bundles on $U$.

We will mostly be interested in the situation where the submodule is an ideal $I \subseteq R$ in the ring. We usually fix ideal generators $I=(f_1, \ldots ,f_n)$ and $(*)$ becomes
$$ R^n \stackrel{f_1, \ldots ,f_n}{\longrightarrow} R \longrightarrow R/I \longrightarrow 0 \, .$$
The ideal generators and an element $f \in R$ defines then the forcing equation $f_1T_1+ \ldots f_nT_n-f =0$. Moreover, if the ideal is primary to a maximal ideal $\fom$, then we have a syzygy bundle $\Syz= \Syz(f_1 \comdots f_n)$ defined on $D(\fom)$.

\section{Forcing algebras and torsors}
\label{torsor}

Let $Z \subset \Spec R$ be the support of $M/N$ and let $U=\Spec R
-Z$ be the open complement where $\repmatrix$ is surjective. Let $s
\in M$ with forcing algebra $\forca$. We set $\torsor = \Spec \forca|_U$
and we assume that the fibers are non-empty (in the ideal case this
means that $f$ is not a unit). Then the action of the group scheme
$V$ on $\Spec \forca$ restricts to an action of the syzygy bundle
$\Syz=V|_U$ on $\torsor$, and this action is \emph{simply
transitive}. This means that locally the actions looks like the
action of $\Syz$ on itself by addition.

In general, if a vector bundle $\shS$ on a separated scheme $U$ acts
simply transitively on a scheme $\torsor \rightarrow U$ -- such a
scheme is called a geometric $\shS$-\emph{torsor} or an
\emph{affine-linear bundle} --, then this corresponds to a
cohomology class $c \in H^1(U, \shS)$ (where $\shS$ is now also the
sheaf of sections in the vector bundle $\shS$). This follows from
the $\rm \check{C}$ech cohomology by taking an open covering where
the action can be trivialized. Hence forcing data define, by
restricting the forcing algebra, a torsor
$\torsor$ over $U$.

On the other hand, the forcing data define the short exact sequence
$0 \ra \Syz \ra \O_U^n \ra \O_U^m \ra 0$ and $s$ is represented by
an element $s' \in R^m \rightarrow \Gamma(U, \O_U^m)$. By the
connecting homomorphism $s'$ defines a cohomology class
$$c =\delta (s') \in H^1(U, \Syz) \, .$$
An explicit computation of $\rm \check{C}$ech cohomology shows that
this class corresponds to the torsor given by the forcing algebra.

Starting from a cohomology class $c \in H^1(U,\shS)$, one may
construct a geometric model for the torsor classified by $c$ in the
following way: because of $H^1(U,\shS) \cong
\operatorname{Ext}^1(\O_U,\shS)$ we have an extension
$$0 \lra \shS \lra \shS' \lra \O_U \lra 0 \, .$$
This sequence induces projective bundles $\PP(\shS^\dual)
\hookrightarrow \PP(\shS'^\dual)$ and $T(c) \cong \PP(\shS'^\dual) -
\PP(\shS^\dual)$. If $\shS=\Syz(f_1 \comdots f_n)$ is the syzygy bundle for ideal generators, then the extension given by the cohomology class $\delta(f)$ coming from another element $f$ is easy to describe: it is just
$$0 \lra \Syz(f_1 \comdots f_n) \lra \Syz(f_1 \comdots f_n,f) \lra \O_U \lra 0 \, $$
with the natural embedding (extending a syzygy by zero in the last
component). This follows again from an explicit computation in $\rm
\check{C}$ech cohomology.

If the base $U$ is projective, a situation in which we will work
starting with Section \ref{gradedsection}, then $\PP(\shS'^\dual)$
is also a projective variety and $\PP(\shS^\dual)$ is a subvariety
of codimension one, a divisor. Then properties of the torsor are
reflected by properties of the divisor and vice versa.

\section{Forcing algebras and closure operations}
\label{closure}

A \emph{closure operation} for ideals or for submodules is an assignment
$$N \longmapsto N^c$$ for submodules $N \subseteq M$ of $R$-modules
$M$ such that $N \subseteq N^c =(N^c)^c$ holds. One often requires
further nice properties of a closure operation, like \emph{monotony}
or the \emph{independence of representation} (meaning that $s \in N^c$ if
and only
if $\bar{s} \in 0^c$ in $M/N$).
Forcing algebras are very natural objects to study such closure
operations.
The underlying philosophy is that $s \in N^c$ holds if and only if
the forcing morphism $\Spec \forca \rightarrow \Spec R$ fulfills a
certain property (depending on and characterizing the closure
operation).
The property is in general not uniquely determined; for the
identical closure operation one can take the properties to be
faithfully flat, to be
(cyclic) pure, or to have a
(module- or ring-) section.

Let us consider some easy closure operations to get a feeling for
this philosophy. In Section \ref{sectiontightsolid} we will see how
tight closure can be characterized with forcing algebras.

\begin{example}
For the \emph{radical} $\operatorname{rad} (I)$ the corresponding property is that $\Spec \forca \rightarrow \Spec
R$ is surjective. It is not surprising that a rough
closure operation corresponds to a rough property of a morphism. An
immediate consequence of this viewpoint is that we get at once a
hint of what the radical of a submodule should be: namely $s \in
\sqrt{N}
$
if and only if the forcing algebra is $\Spec$-surjective. This is
equivalent to the property that $s \otimes 1 \in \im (N \otimes_RK
\rightarrow M \otimes_RK)$ for all homomorphism $R \rightarrow K$ to
fields (or just for all $\kappa(\fop)$, $ \fop \in \Spec R$).
\end{example}

\begin{example}
We now look at the \emph{integral closure} of an ideal, which is
defined by
$$\bar{I}
= \{f \in R:\, \text{ there exists }
f^k +a_1f^{k-1}+ \ldots +a_{k-1}f + a_k=0,\, a_i \in I^{i}\} \, .$$
The integral closure was first used by Zariski as it describes
the normalization of blow-ups.
What is the corresponding property of a morphism?

We look at an example. For $R=K[X,Y]$ we have $X^2Y \in
\overline{(X^3,Y^3)}$ and $XY \not\in
\overline{(X^3,Y^3)}$. The inclusion follows from $(X^2Y)^3=X^6Y^3
\in (X^3,Y^3)^3$. The non-inclusion follows from the \emph{valuation
criterion} for integral closure: This says for a noetherian domain
$R$ that
$f
\in
\bar{I}$
if and only if for all mappings to discrete valuation rings $\varphi: R \rightarrow
V$ we have $\varphi(f) \in IV$. In the example the mapping $K[X,Y]
\rightarrow K[X]$, $ Y \mapsto X$, yields $X^2 \not\in (X^3)$, so it
can not belong to the integral closure. In both cases the mapping
$\Spec \forca \rightarrow \Spec R$ is surjective. In the second
case, the forcing algebra over the line $V(Y-X)$ is given by the
equation $T_1X^3+T_2X^3+X^2=X^2((T_1+T_2)X+1)$. The fiber over the
zero point is a plane and is an affine line over a hyperbola for
every other point of the line. The topologies above and below are
not much related:
The inverse image of the non-closed punctured line is closed, hence
the topology downstairs does not carry the \emph{image topology}
from upstairs. In fact, the relationship in general is
$$f \in \bar{I} \text{ if and only if } \Spec \forca \rightarrow
\Spec R
\text{ is universally a \emph{submersion} }$$ (a submersion in the
topological sense). This relies on the fact that both properties can
be checked with (in the noetherian case discrete) valuations
(for this criterion for submersions, see \cite{SGA1} and \cite{blicklebrennerintegral}).

Let us consider the forcing algebras for $(X,Y)$ and $1$ and for
$(X^2,Y^2)$ and $XY$ in $K[X,Y]$. The restricted spectra of the forcing algebras over the
punctured plane for these two forcing data are isomorphic, because
both represent the torsor given by the cohomology class
$\frac{1}{XY} = \frac{XY}{X^2Y^2} \in H^1(D(X,Y), \O)$. However, $XY
\in \overline{(X^2,Y^2)}$, but $1 \not\in \overline{(X,Y)}$ (not even in the
radical). Hence integral closure can be characterized by the forcing
algebra, but not by the induced torsor. An interesting feature of
tight closure is that it only depends on the cohomology class in the
syzygy bundle and the torsor induced by the forcing algebra
respectively.
\end{example}

\begin{example}
In the case of finitely generated algebras over the complex numbers
there is another interesting closure operation, called
\emph{continuous closure}. An element $s$ belongs to the continuous
closure of $N$ if the forcing algebra $\forca$ is such that the
morphism $\CC-\Spec
\forca \rightarrow \CC-\Spec R$ has a continuous section in the
\emph{complex topology}. For an ideal $I=(f_1 \comdots f_n)$ this is
equivalent to the existence of complex-continuous functions $g_1
\comdots g_n:\CC-\Spec R \rightarrow \CC$ such that $\sum_{i=1}^n
g_if_i=f$ (as an identity on $\CC-\Spec R$).
\end{example}

\begin{remark}
One can go one step further with the understanding of closure operations in terms of
forcing algebras. For this we take the forcing algebras which are
allowed by the closure operation (i.e., forcing algebras for
$s,N,M$, $s \in N^c$) and declare them to be the defining covers of a
(non-flat) \emph{Grothendieck topology}. This works basically for
all closure operations fulfilling certain natural conditions. This
embeds closure operations into the much broader context of
Grothendieck topologies, see \cite{brennergrothendieck}.
\end{remark}

\section{Tight closure as solid closure}
\label{sectiontightsolid}

We come back to tight closure, and its interpretation in terms of forcing algebras and
\emph{solid closure}.

\begin{theorem}
\label{solidtight}
Let $(R, \fom)$ be a local excellent normal domain of positive characteristic and let
$I$ denote an $\fom$-primary ideal. Then $f \in I^*$ if and only if
$H^{\dim R}_\fom (\forca) \neq 0$, where $A$ denotes the forcing algebra.
\end{theorem}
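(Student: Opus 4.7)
The plan is to factor the theorem through Hochster's notion of a \emph{solid} $R$-algebra: $\forca$ is called solid over $R$ if $\Hom_R(\forca, R) \neq 0$. Setting $d = \dim R$, the statement then decomposes into two independent equivalences, namely (II) $H^d_\fom(\forca) \neq 0$ if and only if $\forca$ is solid over $R$, and (I) $f \in I^*$ if and only if $\forca$ is solid over $R$. The first is characteristic-free and is a local-duality argument; the second is where tight closure theory enters.

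For part (II), I would first reduce to the case where $R$ is complete. This is legitimate because the $\fom$-adic completion $\hat R$ remains a normal domain by excellence, and both local cohomology and the $\Hom$ in the definition of solidity behave well under the faithfully flat base change $R \to \hat R$. In the complete normal case, Matlis duality identifies $H^d_\fom(\forca)$ with the Matlis dual of $\Hom_R(\forca, E)$, where $E$ is the injective hull of the residue field. Using normality to replace $E$ by a shift of the canonical module, non-vanishing of $H^d_\fom(\forca)$ becomes equivalent to the existence of a nonzero $R$-linear map $\forca \to R$.

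For the forward direction of part (I), the key observation is a clean Frobenius identity that holds inside $\forca$ itself: since $p = \operatorname{char}(R)$,
$$f^q \,=\,(f_1 T_1 + \ldots + f_n T_n)^q \,=\,f_1^q T_1^q + \ldots + f_n^q T_n^q ,$$
so that $f^q \in I^{[q]}\forca$ for every $q = p^e$. Combined with the tight closure hypothesis $c f^q \in I^{[q]} R$ for some nonzero $c$ and all $q \gg 0$, one splices these relations across the Frobenius-indexed chain $f^q = \sum f_i^q T_i^q$ into a compatible system and extracts, following Hochster, a nonzero element of $\Hom_R(\forca, R)$.

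The converse direction of (I) is where I expect the main obstacle: from solidity of $\forca$ — equivalently, from a nonzero class in $H^d_\fom(\forca)$ — one must reconstruct tight closure witnesses $c f^q \in I^{[q]}$ in $R$. This requires the full strength of positive-characteristic tight closure theory: the existence of test elements in excellent local rings (where the excellence hypothesis is used essentially) together with the theory of big Cohen-Macaulay algebras and the Frobenius action on local cohomology developed by Hochster and Huneke. Concretely, one feeds the solidity map together with a test element into the Frobenius action on $H^d_\fom$ to extract the required containments for each $q \gg 0$. This is the hard half of Hochster's solid closure theorem \cite{hochstersolid}, and it is precisely where all three hypotheses — positive characteristic, normality, and excellence — enter indispensably.
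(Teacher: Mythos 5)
Your decomposition of the theorem into (II) a characteristic-free duality statement relating $H^d_\fom(\forca)\neq 0$ to solidity, and (I) the equivalence between solidity and membership in tight closure, matches the paper's strategy, and the reduction to the complete case via excellence and the Matlis-duality argument for (II) is in the right spirit. But you have the difficulty of (I) exactly backwards, and this is not a minor slip.

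The direction you call ``the hard half'' --- solidity of $\forca$ implies $f\in I^*$ --- is in fact the easy one, and it requires none of the machinery you invoke. Once you have a nonzero $R$-linear map $\psi\colon\forca\to R$, replace $\psi$ by $\psi\circ(\cdot\, a)$ for a suitable $a\in\forca$ so that $c:=\psi(1)\neq 0$. Then for every $q=p^{e}$, raising the forcing relation $f=\sum_{i=1}^n f_iT_i$ to the $q$th power inside $\forca$ gives $f^q=\sum_{i=1}^n f_i^qT_i^q$, and applying the $R$-linear $\psi$ yields
$$ cf^q \;=\; \psi(f^q) \;=\; \sum_{i=1}^n f_i^q\,\psi(T_i^q)\;\in\;I^{[q]} $$
for all $q$. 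That is already the tight closure condition with $c$ as witness. There is no appeal to test elements, big Cohen--Macaulay algebras, or the Frobenius action on local cohomology; you do not need $c$ to be a test element, only a single nonzero element that works for all $q$, and $c=\psi(1)$ is such an element by construction. The hypotheses enter only in the way you already identified for (II): excellence to complete while preserving normality, and normality to pass from the canonical module to $R$ in the duality step.

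Conversely, the direction you dispatch with a vague ``splice these relations into a compatible system and extract a map'' --- namely $f\in I^*$ implies $\forca$ is solid, equivalently $H^d_\fom(\forca)\neq 0$ --- is the genuinely hard implication. There is no direct recipe for manufacturing an element of $\Hom_R(\forca,R)$ out of the data $cf^q\in I^{[q]}$; this is where the substantive input from \cite{hochstersolid} is needed, and it is the direction the paper itself does not reproduce but cites. As written, your proposal proves neither direction: the one you treat as easy is hand-waved at the crucial step, and the one you treat as hard is burdened with irrelevant machinery in place of the short argument that actually works.
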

\begin{proof}
We indicate the proof of the direction that the cohomological property implies the
tight closure inclusion.
By the assumptions we may assume that $R$ is complete. Because of
$H^{\dim R}_{\fom} (\forca) \neq 0$ there exists by Matlis-duality a non-trivial $R$-module
homomorphism $\psi: \forca \rightarrow R$ and we may assume
$\psi(1)=:c
\neq 0$. In $\forca$ we have the equality $f = \sum_{i=1}^n f_iT_i$ and
hence
$$f^q =  \sum_{i=1}^n f_i^qT_i^q \text{ for all } q=p^{e} \, . $$
Applying the $R$-homomorphism $\psi$ to these equations gives
$$cf^q = \sum _{i=1}^n f_i^q \psi(T_i^q) \, ,$$
which is exactly the tight closure condition (the $ \psi(T_i^q)$ are
the coefficients in $R$). For the other direction see
\cite{hochstersolid}.
\end{proof}

This theorem provides the bridge between tight closure and cohomological properties of
forcing algebras. The first observation is that the property about
\emph{local cohomology}
on the right hand side does not refer to positive characteristic.
The closure operation defined by this property is called \emph{solid
closure}, and the theorem says that in positive characteristic and
under the given further assumptions
solid closure and tight closure coincide. The hope was that this
would provide a closure operation in all (even mixed)
characteristics with similar properties as tight closure. This hope
was however destroyed by the following example of Paul Roberts (see
\cite{robertscomputation})

\begin{example} (Roberts)
Let $K$ be a field of characteristic zero and consider
$$\forca=K[X,Y,Z]/(X^3T_1+Y^3T_2+Z^3T_3-X^2Y^2Z^2) \, .$$
Then
$H^3_{(X,Y,Z)}(\forca) \neq 0$. Therefore $X^2Y^2Z^2 \in (X^3,Y^3,Z^3)^{\solidclosure}$
in the regular ring $K[X,Y,Z]$. This means that in a
three-dimensional regular ring an ideal needs not be solidly-closed.
It is however an important property of tight closure that every
regular ring is \emph{$F$-regular}, namely that every ideal is
tightly closed. Hence solid closure is not a good replacement for
tight closure (for a variant called \emph{parasolid closure} with
all good properties in equal characteristic zero, see
\cite{brennerparasolid}).
\end{example}

Despite this drawback, solid closure provides an important interpretation of tight closure.
First of all we have for $d= \dim (R) \geq 2$ (the one-dimensional case is trivial) the
identities
$$H^d_\fom (\forca) \cong H^d_{\fom \forca} (\forca) \cong H^{d-1} (D(\fom \forca), \O) \, .$$
This easy observation is quite important. The open subset
$D(\fom \forca) \subseteq \Spec \forca$
is exactly the torsor induced by the forcing algebra over the
punctured spectrum $D(\fom) \subset \Spec R$. Hence we derive at an
important particularity of tight closure: tight closure of primary
ideals in  a normal excellent local domain depends only on the
torsor (or, what is the same, only on the cohomology class of the
syzygy bundle).
We recall from the last section that this property does not hold for integral closure.

By Theorem \ref{solidtight}, tight closure can be understood by studying the global sheaf
cohomology of the torsor given by a first cohomology class of the
syzygy bundle. The forcing algebra provides a geometric model for
this torsor. An element $f$ belongs to the tight closure if and only
if the \emph{cohomological dimension} of the torsor $T$ is $d-1$
(which is the cohomological dimension of
$D(\fom)$), and $f \not \in I^*$ if and only if the cohomological
dimension drops. Recall that the cohomological dimension of a scheme
$U$ is the largest number $i$ such that
$H^{i}(U, {\mathcal F}) \neq 0$ for some (quasi-)coherent sheaf $\mathcal F$ on $U$.
In the quasiaffine case, where $U \subseteq \Spec B$ (as in the case of torsors inside the
spectrum of the forcing algebra), one only has to look at the
structure sheaf ${\mathcal F}= \O$.

In dimension two this means that $f \in I^*$ if and only if the cohomological dimension of the
torsor is one, and $f \not\in I^*$ if and only if this is zero.
By a theorem of Serre (\cite[Theorem III.3.7]{hartshornealgebraic})
cohomological dimension zero means that $U$ is an \emph{affine
scheme}, i.e., isomorphic as a scheme to the spectrum of a ring (do
not confuse the ``affine'' in affine scheme with the ``affine'' in
affine-linear bundle).

It is in general a difficult question to decide whether a quasiaffine scheme is an affine scheme. Even in the special case of torsors there is no general machinery to
answer it. A necessary condition is that the complement has pure
codimension one (which is fulfilled in the case of torsors). So far
we have not gained any criterion from our geometric
interpretation.

\newcommand{\curv}{{C}}

\section{Tight closure in graded dimension two}
\label{gradedsection}

From now on we deal with the following situation:
$R$ is a two-dimensional normal standard-graded domain over an algebraically closed field of
any characteristic, $I=(f_1 \comdots f_n)$ is a homogeneous
$R_+$-primary ideal with homogeneous generators of degree $d_i=
\deg(f_i)$. Let $\curv = \Proj R$ be the corresponding smooth projective
curve. The ideal generators define the homogeneous resolution
$$0 \lra \Syz(f_1 \comdots f_n) \lra \oplus_{i=1}^n R(-d_i) \stackrel{f_1 \comdots f_n}{\lra} R \lra R/I \lra 0 \, ,$$
and the short exact sequence of vector bundles on $\curv$
$$0 \lra \Syz( f_1 \comdots f_n) \lra \oplus_{i=1}^n \O_\curv(-d_i)
 \stackrel{f_1 \comdots f_n}{\lra} \O_\curv \lra  0 \, .$$
We also need the $m$-twists of this sequence for every $m \in \ZZ$,
$$0 \lra \Syz( f_1 \comdots f_n)(m) \lra \oplus_{i=1}^n \O_\curv(m-d_i)
 \stackrel{f_1 \comdots f_n}{\lra} \O_\curv(m) \lra  0 \, .$$
It follows from this \emph{presenting sequence} by the additivity of rank and degree
that the vector bundle $\Syz(f_1 \comdots f_n)(m)$ has rank $n-1$
and degree $$(m(n-1) - \sum_{i=1}^n d_i) \deg \curv$$
(where $\deg \curv = \deg \O_\curv(1)$ is the degree of the curve).

A homogeneous element $f \in R_m= \Gamma(\curv, \O_\curv(m))$
defines again a cohomology class $c \in H^1(\curv, \O_\curv(m))$ as
well as a torsor $T(c) \rightarrow \curv$. This torsor is a
homogeneous version of the torsor induced by the forcing algebra on
$D(m) \subset \Spec R$. This can be made more precise by endowing
the forcing algebra $\forca=R[T_1 \comdots T_n]/(f_1T_1 \plusdots
f_nT_n-f)$ with a (not necessarily positive) $\ZZ$-grading and
taking $T=D_+(R_+) \subseteq \Proj A$. From this it follows that the
affineness of this torsor on $\curv$ is decisive for tight closure.
The translation of the tight closure problem via forcing algebras
into torsors over projective curves has the following advantages:

\begin{enumerate}

\item
We can work over a smooth projective curve, i.e., we have reduced the dimension of the base and we have removed the singularity.

\item
We can work in a projective setting and use intersection theory.

\item
We can use the theory of vector bundles, in particular the notion of semistable bundles and their moduli spaces.
\end{enumerate}

We will give a criterion when such a torsor is affine and hence when
a homogeneous element belongs to the tight closure of a graded
$R_+$-primary ideal. For this we need the following definition.

\begin{definition}
Let $\shS$ be a locally free sheaf on a smooth projective curve
$\curv$. Then $\shS$ is called \emph{semistable}, if $\deg(\shT)/\rk
(\shT) \leq \deg(\shS)/\rk (\shS) $ holds for all subbundles $\shT \neq 0$. In positive characteristic,
$\shS$ is called \emph{strongly semistable}, if all \emph{Frobenius pull-backs} $F^{e*}( \shS)$, $e \geq 0$, are semistable (here $F: C
\rightarrow C$ denotes the \emph{absolute Frobenius} morphism).
\end{definition}

Note that for the syzygy bundle we have the natural isomorphism (by pulling back the presenting sequence) $$F^{e*}(\Syz(f_1 \comdots f_n )) \cong \Syz(f_1^q \comdots f_n^q) \, .$$
Therefore the Frobenius pull-back of the cohomology class $\delta(f) \in $ \linebreak $H^1(C,\Syz(f_1 \comdots f_n )(m))$ is
$$ F^{e*}( \delta(f)) = \delta(f^q) \in H^1(C,\Syz(f_1^q \comdots f_n^q )(qm)) \, .$$

The following two results establish an exact numerical \emph{degree bound} for tight closure under the condition that the syzygy bundle is strongly semistable.

\begin{theorem}
\label{ssinclusion}
Suppose that $\Syz(f_1 \comdots f_n)$ is strongly semistable. Then we have $R_m \subseteq I^*$ for $m \geq (\sum_{i=1}^n d_i)/(n-1)$.
\end{theorem}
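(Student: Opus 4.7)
The plan is to translate the statement $f \in I^{*}$ into the vanishing of suitable twisted first cohomology classes on the curve $\curv = \Proj R$ and then to use strong semistability to force that vanishing by a suitably chosen universal test element $c$. Throughout, $\curv$ is a smooth projective curve (since $R$ is two-dimensional and normal, hence Cohen-Macaulay), of some genus $g$, with $R_m \hookrightarrow \Gamma(\curv, \O_\curv(m))$.

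First I would set up the cohomological reformulation. For each $q = p^{e}$ and each integer $k$, twisting the presenting sequence for the Frobenius powers $f_1^q \comdots f_n^q$ by $\O_\curv(k)$ yields a short exact sequence
$$0 \lra \Syz(f_1^q \comdots f_n^q)(qm+k) \lra \oplus_{i=1}^{n} \O_\curv(qm+k-qd_i) \lra \O_\curv(qm+k) \lra 0,$$
whose connecting homomorphism $\delta$ fits into a commutative diagram compatible with multiplication by a homogeneous $c$ of degree $k$, viewed as a section $\O_\curv \lra \O_\curv(k)$. Consequently $\delta(cf^q) = c \cdot \delta(f^q)$ in $H^{1}(\curv, \Syz(f_1^q \comdots f_n^q)(qm+k))$, and $cf^q \in I^{[q]}$ is equivalent to this class being zero. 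So $f \in I^{*}$ reduces to finding a single nonzero homogeneous $c$ that annihilates $\delta(f^q)$ simultaneously for every $q$.

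Next I would use strong semistability together with a slope computation. Pulling the presenting sequence back by the $e$th absolute Frobenius of $\curv$ identifies $\Syz(f_1^q \comdots f_n^q)$ with $F^{e*}\Syz(f_1 \comdots f_n)$, which is semistable by hypothesis; twisting by a line bundle preserves semistability, and Frobenius multiplies slopes by $q$. Hence $\Syz(f_1^q \comdots f_n^q)(qm+k)$ is semistable of slope
$$\mu \,=\, \deg(\curv) \Bigl( q \bigl( m - \tfrac{\sum_{i=1}^{n} d_i}{n-1} \bigr) + k \Bigr) \,\geq\, k \deg(\curv),$$
using the hypothesis $m \geq \sum_i d_i / (n-1)$. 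The standard $H^{1}$-vanishing principle now applies: if $\shS$ is semistable on $\curv$ with $\mu(\shS) > 2g-2$, then $H^{1}(\curv, \shS) = 0$. Indeed Serre duality gives $H^{1}(\curv, \shS)^{\dual} \cong \Hom(\shS, \omega_\curv)$, and a nonzero such map would have image a line subbundle of $\omega_\curv$ of degree at most $2g-2$, exhibiting a quotient of $\shS$ of slope $< \mu(\shS)$ and contradicting semistability.

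Finally, I would close the argument by choosing $c$ uniformly. Pick any nonzero $c \in R_k$ (for example a power of a nonzero element of $R_1$, which exists since $R$ is standard-graded) with $k$ large enough that $k \deg(\curv) > 2g-2$. Then for every $q \geq 1$ the bundle $\Syz(f_1^q \comdots f_n^q)(qm+k)$ is semistable of slope strictly greater than $2g-2$, so its $H^{1}$ vanishes, forcing $c \cdot \delta(f^q) = 0$ and hence $cf^q \in I^{[q]}$ for all $q$; this gives $f \in I^{*}$. The main obstacle is that a single $c$ independent of $q$ must handle all Frobenius powers at once, which is exactly why the hypothesis must be \emph{strong} semistability: without Frobenius-stability of the semistable property, $F^{e*}\Syz$ could destabilize and the uniform slope bound would fail. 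The critical case $m = \sum_i d_i / (n-1)$ is precisely where no positive contribution comes from $q$ and the entire slope margin above $2g-2$ must be supplied by the twist by $c$.
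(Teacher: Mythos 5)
Your proof is correct and follows essentially the same route as the paper's: twist the Frobenius pull-back of the presenting sequence, use strong semistability to control slopes of $\Syz(f_1^q \comdots f_n^q)(qm+k)$, and conclude via Serre duality that a fixed homogeneous $c$ of degree $k$ with $k\deg(\curv) > 2g-2$ kills $\delta(f^q)$ for every $q$ simultaneously. The only cosmetic difference is packaging: the paper dualizes and observes that a semistable bundle of negative degree has no global sections, while you state the equivalent $H^1$-vanishing criterion for semistable bundles of slope exceeding $2g-2$.
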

\begin{proof}
Note that the degree condition implies that $\shS:=\Syz(f_1 \comdots f_n)(m)$ has non-negative degree. Let $c \in H^1(\curv,\shS)$ be any cohomology class (it might be $\delta(f)$ for some $f$ of degree $m$). The pull-back $F^{e*}(c)$ lives in $H^1(\curv,F^{e*}(\shS))$. Let now $k$ be such that $\O_\curv(-k) \otimes \omega_\curv $ has negative degree, where $\omega_\curv$ is the canonical sheaf on the curve. Let $z \in \Gamma(\curv, \O_\curv (k))=R_k$, $z \neq 0$. Then $zF^{e*}(c) \in H^1(\curv, F^{e*}(\shS ) \otimes \O_\curv(k))$. However, by degree considerations, these cohomology groups are zero: by Serre duality they are dual to
$H^0( \curv, F^{e*}( \shS ^{\dual} ) \otimes \O_\curv (-k) \otimes \omega_\curv )$,
and this bundle is semistable of negative degree, hence it can not
have global sections. Because of $z F^{e*}(c)=0$ it follows that $z
f^q$ is in the image of the mapping given by $f_1^q \comdots f_n^q$,
so $z f^q \in I^{[q]}$ and $f \in I^*$.
\end{proof}

\begin{theorem}
\label{ssexclusion}
Suppose that $\Syz(f_1 \comdots f_n)$ is strongly semistable. Let $m < (\sum_{i=1}^n d_i)/(n-1)$ and let $f$ be a homogeneous element of degree $m$. Suppose that $f^{{p^a}} \not\in I^{[p^a]}$ for $a$ such that $p^a > gn(n-1)$ (where $g$ is the genus of $C$). Then $f \not\in I^*$.
\end{theorem}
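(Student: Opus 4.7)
The plan is to argue by contradiction via the torsor interpretation of Section~\ref{sectiontightsolid}. Suppose $f \in I^*$; the goal is to contradict the hypothesis $f^{p^a}\notin I^{[p^a]}$.

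\emph{Frobenius rescaling.} Using the equivalence $f\in I^*$ iff $f^{p^a}\in(I^{[p^a]})^*$ and the identification $F^{a*}\Syz(f_1,\ldots,f_n)=\Syz(f_1^{p^a},\ldots,f_n^{p^a})$, I replace $(f,f_i,I,m)$ by $(f^{p^a},f_i^{p^a},I^{[p^a]},p^a m)$. The new syzygy bundle is $F^{a*}\shS$, semistable because $\shS$ is strongly semistable. After this replacement, the hypothesis becomes $\delta(f)\ne 0$ in $H^1(C,\shS)$, and since the integer $m(n-1)-\sum d_i$ is nonzero, combining with $p^a>gn(n-1)$ yields the sharpened slope bound $|\mu(\shS)|\ge \frac{p^a}{n-1}\deg C > gn\deg C$.

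\emph{Reduction to affineness.} Form the extension
\[
0\lra \shS\lra \shS'\lra \O_C\lra 0, \qquad \shS'=\Syz(f_1,\ldots,f_n,f)(m),
\]
corresponding to $\delta(f)$. Dualizing embeds $\PP(\shS^\vee)\hookrightarrow \PP((\shS')^\vee)$ as a divisor in $|\O_{\PP((\shS')^\vee)}(1)|$, and the projective torsor $T(\delta(f))$ on $C$ is its open complement. Hence $T(\delta(f))$ is affine iff $(\shS')^\vee$ is ample on $C$, equivalently $\mu_{\max}(\shS')<0$. By Theorem~\ref{solidtight} together with the graded decomposition of local cohomology, affineness of $T(\delta(f))$ forces $f\notin I^*$, contradicting our assumption.

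\emph{Verifying ampleness.} Any subbundle $\shT\subset \shS'$ either lies inside $\shS$ (Case~A), in which case $\mu(\shT)\le \mu(\shS)<0$ by semistability of $\shS$, or projects onto a sub-line-bundle $\O_C(-D)\subseteq \O_C$ with effective $D\ge 0$ (Case~B). In Case~B with $s=\rk(\shT\cap\shS)$, a direct slope calculation gives $\mu(\shT)\le (s\mu(\shS)-D\deg C)/(s+1)\le 0$, with equality forcing $s=0$, $D=0$ and hence $\shT=\O_C$ a splitting section---excluded by $\delta(f)\ne 0$. The sharpened bound $|\mu(\shS)|>gn\deg C$ enters here to ensure that the lifting obstruction $\delta(f)\cdot s\in H^1(C,\shS(D))$, which by Serre duality equals $H^0(C,\shS^\vee(-D)\otimes \omega_C)^\vee$, is genuinely nonzero for every potentially destabilizing choice of $D$, so that $\mu_{\max}(\shS')<0$ strictly.

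The main obstacle is this final slope-and-obstruction calibration in Case~B: it requires a careful Riemann--Roch count on $C$ to verify that $p^a>gn(n-1)$ is sharp enough to exclude all residual destabilizing subbundles of $\shS'$, and this is where the genus $g$ enters the numerical threshold.
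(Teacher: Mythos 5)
Your overall strategy --- Frobenius rescaling, the extension $0 \to \shS \to \shS' \to \O_C \to 0$, and reducing affineness of the torsor to ampleness of $(\shS')^{\dual}$ --- is exactly the paper's approach. The rescaling computation and the reduction from affineness of the torsor to a positivity statement about the divisor $\PP(\shS^{\dual}) \subset \PP((\shS')^{\dual})$ are sound. But there is a genuine gap, and you have misidentified where it sits.

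The gap is in the ampleness criterion. You assert that $(\shS')^{\dual}$ is ample ``equivalently $\mu_{\max}(\shS') < 0$,'' and your Case A/Case B subbundle analysis establishes exactly that inequality for $\shS'$ itself. In positive characteristic this is not equivalent to ampleness. Ampleness of a vector bundle on a curve over $\overline{\FF}_p$ is governed by the \emph{asymptotic} minimal slope $\lim_e \mu_{\min}(F^{e*}(\shS')^{\dual})/p^e$, i.e.\ one must control $\mu_{\max}(F^{e*}\shS')$ for \emph{all} $e$, not just $e = 0$. Your slope analysis applies verbatim to $F^{e*}\shS'$ (because $\shS$ is \emph{strongly} semistable) and shows $\mu_{\max}(F^{e*}\shS') < 0$ \emph{provided} $F^{e*}\delta(f) \neq 0$; the only destabilizing subsheaf that could attain slope $\geq 0$ is a lift of $\O_C$, i.e.\ a splitting of $F^{e*}$ of the extension. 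So what you actually need is $F^{e*}\delta(f) \neq 0$ for every $e$, and this is not a consequence of $\delta(f) \neq 0$ (the Frobenius is not injective on $H^1$). This is what the paper's phrase ``by the assumption also a certain Frobenius pull-back of this extension is still non-trivial'' is pointing at: the numerical hypothesis $p^a > gn(n-1)$ is calibrated so that, once the degree of $F^{a*}\shS$ is that negative, non-vanishing of $\delta(f^{p^a})$ forces non-vanishing of all further Frobenius pull-backs, from which ampleness (and hence affineness and $f\notin I^*$) follows.

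Your closing remark locates the difficulty in ``excluding residual destabilizing subbundles of $\shS'$'' via a Riemann--Roch count on $D$. That is not where the genus bound is used: subsheaves $\O_C(-D)$ with $D > 0$ effective have negative slope and never threaten $\mu_{\max}(\shS') < 0$, so there is nothing to exclude there. The genuine remaining step is to show that $F^{e*}\delta(f) \neq 0$ for all $e \geq a$; the kernel of $F^*$ on $H^1(C, F^{e*}\shS)$ is the image of $H^0(C, F^{e*}\shS \otimes B^1_C)$ (with $B^1_C = F_*\O_C/\O_C$), and one must push $-\mu(F^{e*}\shS)$ past $\mu_{\max}(B^1_C)$ to kill this $H^0$. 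That slope comparison, not a count of effective divisors $D$, is where $g$ and the bound on $p^a$ enter, and your proof as written does not address it.
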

\begin{proof}
Here the proof works with the torsor $T$ defined by $c=\delta(f)$.
The syzygy bundle $\shS=\Syz(f_1\comdots f_n)(m)$ has now negative degree, hence its
dual bundle $\shF=\shS^\dual$ is an \emph{ample} vector bundle (as
it is strongly semistable of positive degree). The class defines a
non-trivial dual extension $0 \ra \O_\curv \ra \shF' \ra \shF \ra
0$. By the assumption also a certain Frobenius pull-back of this
extension is still non-trivial. Hence $\shF'$ is also ample and
therefore $\PP(\shF) \subset \PP(\shF')$ is an ample divisor and its
complement $T=\PP(\shF') - \PP(\shF)$ is affine. Hence $f \not\in
I^*$.
\end{proof}

It is in general not easy to establish whether a bundle is strongly semistable or not.
However, even if we do not know whether the syzygy bundle is strongly semistable,
we can work with its \emph{strong Harder-Narasimhan filtration}. The Harder Narasimhan filtration of a vector bundle $\shS$ on a smooth projective curve is a filtration
$$ 0 = \shS_0 \subset \shS_1 \subset \shS_2 \subset \ldots  \subset \shS_{t-1} \subset \shS_{t} = \shS $$
with $\shS_i/\shS_{i-1}$ semistable and descending slopes $$\mu
(\shS_1) > \mu (\shS_2/\shS_1) > \ldots > \mu (\shS/\shS_{t-1}) \, .$$
Since the Frobenius pull-back of a semistable bundle need not be
semistable anymore, the Harder-Narasimhan filtration of $F^*(\shS)$
is quite unrelated to the Harder-Narasimhan filtration of $\shS$.
However, by a result of A. Langer \cite[Theorem 2.7]{langersemistable}, there exists
a certain number $e$ such that the quotients in the
Harder-Narasimhan filtration of $F^{e*}(\shS)$ are strongly semistable.
Such a filtration is called \emph{strong}. With a strong Harder-Narasimhan
filtration one can now formulate an exact numerical criterion for
tight closure inclusion building on Theorems \ref{ssinclusion} and
\ref{ssexclusion}.

The criterion basically says that a torsor is affine (equivalently, $f \not \in I^*$),
if and only if the cohomology class is non-zero in some strongly semistable quotient
of negative degree of the strong Harder-Narasimhan filtration. One
should remark here that even if we start with a syzygy bundle, the
bundles in the filtration are no syzygy bundles, hence it is
important to develop the theory of torsors of vector bundles in full
generality.
From this numerical criterion one can deduce an answer to the
tantalizing question.

\begin{theorem}
\label{tantalizingtheorem}
Let $K= \overline{\FF_p}$ be the algebraic closure of a finite field
and let $R$ be a normal standard-graded $K$-algebra of dimension
two. Then $I^*=I^+$ for every $R_+$-primary homogeneous ideal.
\end{theorem}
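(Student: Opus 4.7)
The inclusion $I^+ \subseteq I^*$ is standard, so the task is to prove $I^* \subseteq I^+$. Since $I$ is homogeneous and both closures are homogeneous in the graded setting (for $I^+$ this uses that finite domain extensions can be chosen graded normal), it suffices to show $f \in I^+$ for every homogeneous $f \in R_m \cap I^*$. Such an $f$ yields on the smooth projective curve $C = \Proj R$ the syzygy bundle $\shS = \Syz(f_1,\ldots,f_n)(m)$ together with a cohomology class $\delta(f) \in H^1(C,\shS)$ classifying the torsor $T \to C$. In the graded picture of Section \ref{gradedsection}, $f \in I^+$ is equivalent to the existence of a finite surjective morphism $\phi\colon C' \to C$ of smooth projective curves with $\phi^\ast \delta(f) = 0$: the homogeneous coordinate ring $S$ of $C'$ with respect to $\phi^\ast \O_C(1)$ is then a finite normal graded domain extension of $R$, and the vanishing of the pulled-back class produces homogeneous $t_1,\ldots,t_n \in S$ with $\sum f_i t_i = f$.

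The plan is to produce such a $\phi$ by working along the strong Harder-Narasimhan filtration
\[
0 = \shS_0 \subset \shS_1 \subset \cdots \subset \shS_t = \shS,
\]
whose graded pieces $Q_i = \shS_i/\shS_{i-1}$ are strongly semistable of strictly descending slopes. Let $k$ be maximal with $\deg Q_k \geq 0$. The numerical criterion at the end of Section \ref{gradedsection}, refining Theorems \ref{ssinclusion} and \ref{ssexclusion}, asserts that $f \in I^*$ if and only if $\delta(f)$ lies in the image of $H^1(C,\shS_k) \to H^1(C,\shS)$. A dévissage along the long cohomology sequences of $0 \to \shS_{i-1} \to \shS_i \to Q_i \to 0$ then reduces the problem to finding a single finite surjective $\phi\colon C' \to C$ that annihilates the image of the class in each $H^1(C,Q_i)$ for $i = 1,\ldots,k$.

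The heart of the argument is therefore the following assertion, which is where the hypothesis $K = \overline{\FF_p}$ enters decisively: for every strongly semistable bundle $\shE$ of non-negative degree on a smooth projective curve $C$ over $\overline{\FF_p}$, every cohomology class $c \in H^1(C,\shE)$ is annihilated by pullback along some finite surjective morphism $C' \to C$. For $\deg \shE = 0$ this rests on the theorem of Lange-Stuhler that $\shE$ is étale-trivializable; after the trivializing cover the task reduces to killing classes in $H^1(C',\O_{C'})^{\rk \shE}$, which one achieves via Artin-Schreier covers built from the characteristic-$p$ sequence $0 \to \FF_p \to \O_{C'} \xrightarrow{F - \id} \O_{C'} \to 0$, coupled with Lang's theorem over a finite subfield of definition for all the data. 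For $\deg \shE > 0$ the analogous statement is more delicate but again reduces, via strong semistability and the arithmetic of vector bundles on curves over $\overline{\FF_p}$, to finding a finite cover on which the associated torsor acquires a section.

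Combining these annihilations across the finitely many pieces $Q_i$, $i \leq k$, yields the desired morphism $\phi\colon C' \to C$ with $\phi^\ast \delta(f) = 0$, whence $f \in I^+$. The main obstacle is precisely the key assertion above, all of whose ingredients (Lange-Stuhler, Artin-Schreier covers, Lang's theorem, the finite-model descent) depend essentially on the finiteness of the prime field; indeed the Brenner-Monsky example highlighted in Section \ref{deformation} shows that the equality $I^* = I^+$ genuinely fails as soon as $K$ contains a transcendental element. Any workable proof must therefore channel the finite-field arithmetic into bundle cohomology at exactly this point.
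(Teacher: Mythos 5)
Your overall route mirrors the paper's: reduce via the strong Harder--Narasimhan filtration to the degree-zero strongly semistable case, invoke Lange--Stuhler to trivialize the bundle after a finite cover, and finish with Artin--Schreier theory on $H^1(C',\mathcal{O}_{C'})$. But there is a genuine gap at precisely the step where the hypothesis $K=\overline{\mathbb{F}_p}$ does the work, and a mischaracterization of the positive-degree case.

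First, you invoke ``the theorem of Lange--Stuhler that $\mathcal{E}$ is \'etale-trivializable'' as though it applied directly to any strongly semistable bundle of degree zero over $\overline{\mathbb{F}_p}$. It does not: Lange--Stuhler requires a Frobenius periodicity $F^{e'*}\mathcal{E}\cong F^{e*}\mathcal{E}$ for some $e'>e$, and that periodicity is exactly what must be established. The paper does this by a boundedness/finiteness argument: $\mathcal{E}$ and all its Frobenius pull-backs are semistable of rank $r$ and degree $0$, they are all defined over a single finite subfield $\mathbb{F}_q$, and semistable bundles of fixed rank and degree over $\mathbb{F}_q$ form a bounded family, hence there are only finitely many of them; therefore some $F^{e'*}\mathcal{E}\cong F^{e*}\mathcal{E}$. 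This finiteness over $\mathbb{F}_q$ is the decisive input from the arithmetic of the base field, and it is absent from your argument. Your appeal to ``Lang's theorem over a finite subfield of definition'' is not a substitute: Lang's theorem concerns torsors under connected algebraic groups over finite fields, and it does not by itself produce the isomorphism among the Frobenius pull-backs of $\mathcal{E}$. Without that repetition, Lange--Stuhler cannot be applied.

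Second, you describe the case $\deg\mathcal{E}>0$ as ``more delicate.'' It is the opposite: if $\mathcal{E}$ is strongly semistable of positive degree, then $F^{e*}\mathcal{E}$ is semistable of degree $q\deg\mathcal{E}$, which for $e$ large exceeds $\mathrm{rk}(\mathcal{E})\cdot\deg\omega_C$; then $H^1(C,F^{e*}\mathcal{E})=0$ by Serre duality (the dual is semistable of negative degree). Since absolute Frobenius is itself a finite surjective morphism of $C$, the class is killed by this pull-back with no further work. So the positive-degree pieces of the filtration require only the Frobenius, and the entire difficulty is concentrated in the slope-zero piece, which is where the boundedness argument above is needed.
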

\begin{proof}
This follows from the numerical criterion for the affineness of torsors mentioned above.
The point is that the same criterion holds for the non-existence of
projective curves inside the torsor. One reduces to the situation of
a strongly semistable bundle $\shS$ of degree $0$. Every cohomology
class of such a bundle defines a non-affine torsor and hence we have to show that
there exists a projective curve inside, or equivalently, that the
cohomology class can be annihilated by a finite cover of the curve.
Here is
where the finiteness assumption about the field enters. $\shS$ is
defined over a finite subfield $\FF_q \subseteq K$, and it is
represented (or rather, its $S$-equivalence class) by a point in the moduli space of semistable bundles of
that rank and degree $0$. The Frobenius pull-backs $F^{e*}(\shS)$
are again semistable (by strong semistability) and they are defined
over the same finite field.
Because semistable bundles form a bounded family
(itself the reason for the existence of the moduli space), there exist only finitely
many semistable bundles defined over
$\FF_q$ of degree zero.
Hence there exists a repetition, i.e., there exists $e' > e$ such
that we have an isomorphism $F^{e'*}(\shS) \cong F^{e*}(\shS)$. By a
result of H. Lange and U. Stuhler \cite{langestuhler} there exists a
finite mapping
$\curv' \stackrel{\varphi}{\ra} \curv \stackrel{F^{e}}{\ra} \curv$ (with $\varphi$ \'{e}tale)
such that the pull-back of the bundle is trivial. Then one is left
with the problem of annihilating a cohomology class $c
\in H^1(\curv, \O_\curv)$, which is possible using Artin-Schreier
theory (or the graded version of K. Smith's parameter theorem,
\cite{smithparameter}).
\end{proof}

\begin{remark}
This theorem was extended by G. Dietz for $R_+$-primary ideals which are not
necessarily homogeneous \cite{dietztightclosure}. The above proof shows
how important the assumption is that the base field is finite or the
algebraic closure of a finite field. Indeed, we will see in the last
section that the statement is not true when the base field contains
transcendental elements. Also some results on Hilbert-Kunz functions
depend on the property that the base field is finite.
\end{remark}

\newcommand{\shH}{{\mathcal H}}

\section{Applications to Hilbert-Kunz theory}
\label{hilbertkunz}

The geometric approach to tight closure was also successful in
Hilbert-Kunz theory. This theory originates in the work of E. Kunz
(\cite{kunzcharacterization}, \cite{kunznoetherian}) and was largely extended by P. Monsky
(\cite{monskyhilbertkunz}, \cite{hanmonsky}).

Let $R$ be a commutative ring of positive characteristic and let $I$
be an ideal which is primary to a maximal ideal. Then all
$R/I^{[q]}$, $q=p^{e}$, have finite length, and the \emph{Hilbert-Kunz
function} of the ideal is defined to be
$$ e \longmapsto \varphi(e)= \lg (R/ I^{[p^{e}]})\, . $$

Monsky proved the following fundamental theorem of Hilbert-Kunz theory
(\cite{monskyhilbertkunz}, \cite[Theorem 6.7]{hunekeapplication}).

\begin{theorem}
The limit
$$ \lim_{e \mapsto \infty} \frac{\varphi(e)}{p^{e \dim (R) }} $$
exists (as a positive real number) and is called the
\emph{Hilbert-Kunz multiplicity} of $I$, denoted by $e_{\rm HK}
(I)$.
\end{theorem}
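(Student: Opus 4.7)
The plan is to establish that $a_e := \varphi(e)/p^{ed}$ (where $d = \dim R$) is a Cauchy sequence in $\mathbb{R}$, and then to verify that $\lim a_e > 0$. The strategy follows Monsky's original argument, based on the $R$-module structure of the Frobenius pushforward $F^e_* R$ combined with induction on dimension.

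First I would perform the standard reductions. Completion does not change $\lg(R/I^{[p^e]})$, so we may assume $R$ is complete local. Choosing a prime filtration $0 = M_0 \subset \cdots \subset M_n = R$ with $M_i/M_{i-1} \cong R/\mathfrak{p}_i$ and applying additivity of length to the induced filtration of $R/I^{[p^e]}$, the problem decomposes over the quotients $R/\mathfrak{p}_i$. Summands with $\dim R/\mathfrak{p}_i < d$ contribute only $O(p^{e(d-1)})$ by the inductive hypothesis on dimension, so they can be absorbed into the error, and we are reduced to the case where $R$ is a complete local domain of dimension $d$.

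Next, the key object is the $R$-module $F^e_* R$. Since Frobenius is generically flat on a domain (a consequence of Kunz's theorem applied on the regular locus together with a degree computation for the purely inseparable extension of fraction fields), there is a four-term exact sequence
$$0 \lra K_e \lra R^{\oplus p^{ed}} \lra F^e_* R \lra Q_e \lra 0$$
with $K_e$ and $Q_e$ supported in codimension at least one. The basic identity
$$\varphi(e) = \lg_R\bigl(F^e_* R / I \cdot F^e_* R\bigr),$$
which holds because $F^e_*$ preserves $R$-length and because $I \cdot F^e_* R$ corresponds to $I^{[p^e]}$ under the Frobenius twist, then expresses $\varphi(e)$ in terms of $\lg(R/I)$, the cokernel $Q_e$, the kernel $K_e$, and the relevant $\mathrm{Tor}$ terms.

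The main obstacle is controlling $K_e$ and $Q_e$ uniformly in $e$; although their supports have dimension strictly less than $d$, their lengths can grow as fast as $p^{ed}$, so a crude bound is useless. Monsky's insight is to prove the sharper Cauchy-type estimate
$$\bigl|\varphi(e+1) - p^d \varphi(e)\bigr| = O\bigl(p^{e(d-1)}\bigr)$$
by juxtaposing the exact sequences at levels $e$ and $e+1$ and recognising that the discrepancy is governed by Hilbert-Kunz functions of $R$-modules supported on proper closed subschemes, whose asymptotics are exactly what the inductive statement in dimension $d-1$ provides. Dividing by $p^{(e+1)d}$ yields $|a_{e+1} - a_e| = O(p^{-e})$, so the sequence $(a_e)$ is Cauchy and converges. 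Positivity of the limit follows from the chain $I^{[p^e]} \subseteq \mathfrak{m}^{[p^e]} \subseteq \mathfrak{m}^{p^e}$ combined with Hilbert-Samuel asymptotics: $\lg(R/\mathfrak{m}^{p^e}) = (e(\mathfrak{m},R)/d!)\,p^{ed} + O(p^{e(d-1)})$, hence $a_e \geq e(\mathfrak{m},R)/d! - O(p^{-e})$, and the limit is at least $e(\mathfrak{m},R)/d! > 0$.
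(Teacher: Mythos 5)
The paper does not prove this theorem; as a survey it states the result with citations to Monsky's original article and to Huneke's lecture notes, and the introduction says explicitly that proofs are omitted. There is therefore no argument in the text to compare against, but your outline does follow Monsky's original strategy: prime filtration and induction on dimension to reduce to a complete local domain, generic freeness of the Frobenius pushforward, the Cauchy-type estimate $|\varphi(e+1)-p^d\varphi(e)|=O(p^{e(d-1)})$, and positivity via the Hilbert--Samuel multiplicity of $\fom$.

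There is, however, a genuine gap in the way you set up the generic-rank comparison. Both the claim that $F^e_*R$ has generic rank $p^{ed}$ and the claim that ``$F^e_*$ preserves $R$-length'' tacitly assume the residue field $k$ is perfect. For a complete local domain of dimension $d$ with fraction field $K$ one has $[K^{1/p}:K]=p^{d}[k:k^p]$, so the generic rank of $F^e_*R$ is $p^{ed}[k:k^p]^e$, and for a finite-length module $N$ one has $\lg_R(F^e_*N)=[k:k^p]^e\lg_R(N)$. If $[k:k^p]=\infty$ (e.g. $k=\mathbb{F}_p(t_1,t_2,\ldots)$, $R=k[[x]]$), then $F^e_*R$ is not even finitely generated and your four-term sequence does not exist. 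In the finite case the two extraneous factors of $[k:k^p]^e$ would cancel in the final estimate, but neither intermediate statement is correct as written, and the infinite case is not covered at all. The standard fix, which Monsky uses, is a preliminary faithfully flat local base change to a ring with algebraically closed residue field; this preserves $\lg(R/I^{[p^e]})$ and makes both of your claims literally true. A smaller imprecision: tensoring a prime filtration with $R/I^{[p^e]}$ gives only subadditivity of lengths, and the $\mathrm{Tor}_1$ discrepancies, supported in dimension $<d$, must be bounded by $O(p^{e(d-1)})$ via the elementary containment $I^{\,n(p^e-1)+1}\subseteq I^{[p^e]}$ and Hilbert--Samuel growth, not simply by appealing to ``additivity of length'' or to the inductive hypothesis on the existence of the limit.
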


The Hilbert-Kunz multiplicity of the maximal ideal in a local ring
is usually denoted by $e_{\rm HK}(R)$ and is called the Hilbert-Kunz
multiplicity of $R$.
It is an open question whether this number is always rational.
Strong numerical evidence suggests that this is probably not true in
dimension $\geq 4$, see \cite{monskylikelycounterexample}.
We will deal with the two-dimensional situation in a minute, but
first we relate Hilbert-Kunz theory to tight closure (see
\cite[Theorem 5.4]{hunekeapplication}).

\begin{theorem}
\label{hilbertkunztightclosure}
Let $R$ be an analytically unramified and formally equidimensional
local ring of positive characteristic and let $I$ be an
$\fom$-primary ideal. Let $f \in R$. Then $f \in I^*$ if and only if
$$e_{\rm HK} (I) = e_{\rm HK} ((I,f)) \, .$$
\end{theorem}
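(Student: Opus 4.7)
The key identity comes from the short exact sequence
$$0 \lra R/(I^{[q]}:f^q) \stackrel{\cdot f^q}{\lra} R/I^{[q]} \lra R/(I^{[q]}+(f^q)) \lra 0,$$
which, combined with $(I,f)^{[q]} = I^{[q]}+(f^q)$, yields
$$\lg(R/I^{[q]}) - \lg(R/(I,f)^{[q]}) = \lg(R/(I^{[q]}:f^q)).$$
Setting $d = \dim R$ and dividing by $p^{ed}$ before passing to the limit, the theorem is reduced to the claim that $f \in I^*$ if and only if
$$\lim_{e \to \infty} \frac{\lg(R/(I^{[q]}:f^q))}{p^{ed}} = 0.$$

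For the direction $f \in I^* \Rightarrow$ equal multiplicities, I would pick a tight-closure witness $c$, not in any minimal prime, with $c f^q \in I^{[q]}$ for $q \gg 0$. Then $c \in I^{[q]}:f^q$, so there is a surjection $R/(c, I^{[q]}) \twoheadrightarrow R/(I^{[q]}:f^q)$. Formal equidimensionality of $R$ forces $\dim R/(c) \le d-1$, and Monsky's theorem applied to $I \cdot (R/(c))$ in the lower-dimensional ring gives $\lg(R/(c, I^{[q]})) = O(q^{d-1})$. Dividing by $p^{ed}$ sends the limit to zero.

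For the converse I would reduce to the complete local domain case. First pass to the $\fom$-adic completion $\hat R$, which is reduced (by analytic unramifiedness) and equidimensional (by formal equidimensionality); neither the Hilbert-Kunz multiplicities nor the tight-closure membership change. Then invoke the associativity-type formula $e_{\rm HK}(J) = \sum_P \lg(\hat R_P)\, e_{\rm HK}(J (\hat R/P))$, summed over minimal primes $P$, to split the vanishing of the multiplicity difference across each minimal component and reduce to the complete local domain case. In that setting, the existence of a test element $c \neq 0$, supplied by the Hochster-Huneke theory available in excellent rings, is the tool that upgrades the asymptotic information into a uniform tight-closure witness.

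The main obstacle is precisely this last upgrading: converting the asymptotic bound $\lg(R/(I^{[q]}:f^q)) = o(q^d)$ into a uniform containment $c f^q \in I^{[q]}$ with a single $c$ independent of $q$. The heart of the argument, originally due to Hochster-Huneke, combines test-element theory with a careful colength calculation to show that in the absence of such a $c$ the function $\lg(R/(I^{[q]}:f^q))$ must in fact grow at the full rate $q^d$, contradicting the vanishing of the Hilbert-Kunz limit. This is the non-formal content of the theorem, and it is where excellence, analytic unramifiedness, and formal equidimensionality are all essential.
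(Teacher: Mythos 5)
The paper does not prove this theorem; it states it with a citation to Huneke's CBMS notes (Theorem 5.4 there), so there is no in-paper argument to compare against. Your reduction via the colon-ideal exact sequence is the standard one, and your forward implication is correct and complete: a tight-closure witness $c$ (not in any minimal prime) gives, for $q \gg 0$, a surjection from $R/(c,I^{[q]})$ onto $R/(I^{[q]}:f^q)$, and Monsky's theorem applied in $R/(c)$ bounds the colength of the source by $O(q^{d-1})$. One small remark: formal equidimensionality is not needed for that step; in any Noetherian local ring an element avoiding every minimal prime drops the dimension by at least one.

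The converse, which you candidly flag as the non-formal content, is where the real work lies, and you have not supplied it. There are two genuine gaps. First, the assertion that tight-closure membership descends from $\hat R$ to $R$ is not automatic. What one actually needs is a test element $c \in R$ that remains a test element in $\hat R$ (a ``completely stable'' test element), or a theorem of the form $I^* = (I\hat R)^* \cap R$ under the given hypotheses; this is itself a substantive piece of Hochster--Huneke test-element theory, and the hypotheses (analytically unramified, formally equidimensional) are tuned precisely to make it available, so it must be invoked explicitly rather than asserted. Second, and centrally, the implication that $\lg((I^{[q]},f^q)/I^{[q]}) = o(q^d)$ in a complete local domain forces $f \in I^*$ is exactly the Hochster--Huneke colength estimate: from $f \notin I^*$ one produces a constant $\gamma > 0$ with $\lg((I^{[q]},f^q)/I^{[q]}) \geq \gamma\, q^d$ for $q \gg 0$. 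Gesturing at ``test elements plus a careful colength calculation'' does not reconstruct that lower bound; you would need either to reprove it or to cite it precisely. As an outline of the architecture your proposal is faithful, but as a proof it stops at the threshold of the theorem's hard direction.
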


This theorem means that the Hilbert-Kunz multiplicity is related to
tight closure in the same way as the Hilbert-Samuel multiplicity is
related to integral closure.

We restrict now again to the case of an $R_+$-primary homogeneous ideal in
a standard-graded normal domain $R$ of dimension two over an
algebraically closed field $K$ of positive characteristic $p$. In
this situation Hilbert-Kunz theory is directly related to global
sections of the Frobenius pull-backs of the syzygy bundle on $\Proj
R$ (see
Section \ref{gradedsection}). We shall see that it is possible to
express the Hilbert-Kunz multiplicity in terms of the strong
Harder-Narasimhan filtration of this bundle.

For homogeneous ideal generators $f_1 \comdots f_n$ of degrees $d_1,
\ldots , d_n$  we write down again the presenting sequence on $C= \Proj
R$,
$$0 \lra \Syz(f_1\comdots f_n) \lra \bigoplus_{i=1}^n \O_C(-d_i)
\stackrel{f_1\comdots f_n}{\lra } \O_C \lra 0 \, . $$
The $m$-twists of the Frobenius pull-backs of this sequence are
$$0 \lra \Syz(f_1^q\comdots f_n^q)(m) \lra \bigoplus_{i=1}^n \O_C(m-qd_i)
 \stackrel{f^q_1\comdots f^q_n}{\lra} \O_C(m) \lra 0 \, . $$
The global evaluation of the last short exact sequence is
$$0 \lra \Gamma(C,\Syz(f_1^q\comdots f_n^q)(m)) \lra \bigoplus_{i=1}^n
R_{m-qd_i}
\stackrel{f^q_1\comdots f^q_n}{\lra} R_m  \,  ,$$
and the cokernel of the map on the right is
$$R_m/(f_1^q \comdots f_n^q) = ( R/I^{[q]})_m \, . $$
Because of $R/I^{[q]} = \bigoplus_m (R/I^{[q]})_m$, the length of
$R/I^{[q]}$ is the sum over the degrees $m$ of the $K$-dimensions of
these cokernels. The sum is in fact finite  because the ideals
$I^{[q]}$ are primary (or because $H^1(C, \Syz(f_1^q \comdots
f_n^q)(m))=0$ for $m \gg 0$), but the bound for the summation grows
with $q$. Anyway, we have
\begin{eqnarray*}
\dim (R/I^{[q]})_m  &=&
\dim (\Gamma( C, \O_C(m))) - \sum_{i=1}^n \dim (\Gamma( C,
\O_C(m-qd_i))) \cr
& &+ \dim (\Gamma(C, \Syz(f_1^q \comdots f_n^q)(m))) \, .
\end{eqnarray*}
The computation of the dimensions $\dim (\Gamma( C, \O_C(\ell)))$ is
easy, hence the problem is to control the global sections of
$\Syz(f_1^q \comdots f_n^q)(m)$, more precisely, its behavior for
large $q$, and its sum over a suitable range of $m$. This behavior
is encoded in the strong Harder-Narasimhan filtration of the syzygy
bundle. Let $e$ be fixed and large enough such that the
Harder-Narasimhan filtration of the pull-back
$\shH = F^{e*}(\Syz(f_1 \comdots f_n) ) =\Syz(f_1^q \comdots f_n^q)$ is strong.
Let $\shH_j \subseteq \shH $, $j=1 \comdots t$, be the subsheaves
occurring in the Harder-Narasimhan filtration and set
$$\nu_j := \frac{- \mu ( \shH_j/\shH_{j-1})}{p^{e} \deg (C)} \, \text{ and } r_j
= \rk ( \shH_j/\shH_{j-1}) \, .$$
Because the Harder-Narasimhan filtration of $\shH$ and of all its pull-backs is
strong, these numbers are independent of $e$.
The following theorem was shown by Brenner and Trivedi independently
(\cite{brennerhilbertkunzrationality}, \cite{trivedihilbertkunz}).

\begin{theorem}
\label{hkformula}
Let $R$ be a normal two-dimensional standard-graded domain over an
algebraically closed field and let $I=(f_1 \comdots f_n)$ be a
homogeneous $R_+$-primary ideal, $d_i = \deg(f_i)$.
Then the Hilbert-Kunz multiplicity of $I$ is given by the formula
$$ e_{\rm HK} (I)
= \frac{\deg (C)}{2} \left( \sum_{j=1}^t r_j \nu_j^2 - \sum_{i=1}^n d_i^2 \right)\, .$$
In particular, it is a rational number.
\end{theorem}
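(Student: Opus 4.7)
The approach is to compute $\lg(R/I^{[q]})$ for $q = p^e$ up to an additive error of order $q$ and read off the leading $q^2$-coefficient. Starting from the dimension identity established immediately before the theorem,
\[
\dim(R/I^{[q]})_m = h^0(C,\O_C(m)) - \sum_{i=1}^n h^0(C,\O_C(m-qd_i)) + h^0(C,\shH(m)),
\]
with $\shH := \Syz(f_1^q,\ldots,f_n^q) = F^{e*}(\Syz(f_1,\ldots,f_n))$ (valid for $m$ past the Castelnuovo--Mumford regularity of $R$; the finitely many smaller $m$ contribute $O(1)$ to the total length), I would sum over $m \in [0, \kappa q]$ for any cutoff $\kappa > \max(\nu_t, \max_i d_i)$ beyond which every summand stabilises, and split the total as $K(q) + S(q)$. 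Writing $\delta = \deg C$, Riemann--Roch gives $h^0(\O_C(\ell)) = \ell\delta + 1 - g$ for $\ell \gg 0$ and $0$ for $\ell < 0$, from which direct evaluation yields
\[
K(q) = \frac{\delta}{2}\Bigl[\kappa^2 - \sum_{i=1}^n (\kappa - d_i)^2\Bigr] q^2 + O(q).
\]

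For $S(q) = \sum_m h^0(C,\shH(m))$ I would use the strong Harder--Narasimhan filtration $0 = \shH_0 \subset \shH_1 \subset \cdots \subset \shH_t = \shH$ with strongly semistable quotients $Q_j = \shH_j/\shH_{j-1}$ of rank $r_j$ and slope $\mu(Q_j) = -\nu_j q \delta$. Strong semistability, applied to $Q_j$ directly for the lower threshold and via Serre duality to $Q_j^{\vee}\otimes\omega_C(-m)$ for the upper, gives the sharp cohomology vanishings
\[
h^0(Q_j(m)) = 0 \text{ for } m < \nu_j q, \qquad h^1(Q_j(m)) = 0 \text{ for } m > \nu_j q + (2g-2)/\delta,
\]
so that Riemann--Roch provides $h^0(Q_j(m)) = r_j\delta(m - \nu_j q) + r_j(1-g)$ in the upper range, while the narrow transition of width $O(1)$ contributes only $O(1)$ per $j$ in total. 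The slopes are strictly ordered $\nu_1 < \nu_2 < \cdots < \nu_t$, and $\nu_1 \ge \min_i d_i > 0$ (any subbundle of $\shH \hookrightarrow \bigoplus_i \O_C(-qd_i)$ has slope $\le -q\delta\min_i d_i$), so the thresholds $\nu_j q$ are well-separated inside $(0,\kappa q)$ for $q$ large. Crucially, whenever $h^0(Q_j(m)) \neq 0$ one has $h^1(Q_i(m)) = 0$ for all $i < j$; the connecting maps $H^0(Q_j(m)) \to H^1(\shH_{j-1}(m))$ of the filtration therefore vanish, and induction along $j$ yields $h^0(\shH(m)) = \sum_j h^0(Q_j(m)) + O(1)$. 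Summing produces
\[
S(q) = \sum_{j=1}^t \frac{r_j\delta}{2}(\kappa - \nu_j)^2 q^2 + O(q).
\]

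Adding $K(q) + S(q)$, expanding the squares, and using $\sum_j r_j = n-1$ together with $\sum_j r_j\nu_j = \sum_i d_i$ (additivity of rank and degree along the filtration, combined with $\deg\shH = -q\delta\sum_i d_i$), every $\kappa$-dependent term cancels and one arrives at
\[
\lg(R/I^{[q]}) = \frac{\delta}{2}\Bigl(\sum_{j=1}^t r_j\nu_j^2 - \sum_{i=1}^n d_i^2\Bigr) q^2 + O(q).
\]
Division by $q^2$ and the limit $e \to \infty$ give the formula, and rationality is automatic since $\nu_j = -\deg(Q_j)/(q\delta r_j)$ is a ratio of integers. The main obstacle is the comparison $h^0(\shH(m)) = \sum_j h^0(Q_j(m)) + O(1)$: the Harder--Narasimhan filtration is only a filtration, so cohomological information has to be propagated through a chain of short exact sequences, and it is precisely strong semistability that makes this tractable by delivering the sharp $H^1$-vanishing thresholds that confine the filtration-splitting error to $O(1)$-width transition zones at the well-separated points $m = \nu_j q$.
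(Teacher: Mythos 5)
Your proposal is correct and follows essentially the route the paper sketches (and which is carried out in the cited references of Brenner and of Trivedi): sum the dimension identity over $m$, separate the line-bundle part from the syzygy-bundle part, and use the slope thresholds coming from the strong Harder--Narasimhan filtration to evaluate $\sum_m h^0(\shH(m))$ up to $O(q)$, with the cancellation of the cutoff $\kappa$ via $\sum_j r_j = n-1$ and $\sum_j r_j\nu_j = \sum_i d_i$. The details are sound, including the filtration-splitting argument $h^0(\shH(m)) = \sum_j h^0(Q_j(m))$ via vanishing of the connecting maps for $q$ large.
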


We can also say something about the behavior of the Hilbert-Kunz
function under the additional condition that the base field is the
algebraic closure of a finite field (see
\cite{brennerhilbertkunzfunction}).

\begin{theorem}
\label{hilbertkunzperiodic}
Let $R$ and $I$ be as before and suppose that the base field is the
algebraic closure of a finite field. Then the Hilbert-Kunz function
has the form
$$ \varphi(e)= e_{\rm HK}(I) p^{2e} + \gamma(e) \, ,$$
where $\gamma$ is eventually periodic.
\end{theorem}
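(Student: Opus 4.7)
The plan is to isolate the main term $e_{\rm HK}(I) p^{2e}$ using the strong Harder-Narasimhan filtration of the Frobenius pull-backs of the syzygy bundle, and to show that the remainder is governed by finitely many isomorphism data that must cycle because the ground field is $\overline{\FF_p}$.

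Summing the cokernel identity
$$\dim_K (R/I^{[q]})_m = h^0(C, \O_C(m)) - \sum_{i=1}^n h^0(C, \O_C(m - qd_i)) + h^0(C, \shS_e(m))$$
over $m \geq 0$ (with $q = p^e$ and $\shS_e := F^{e*}(\Syz(f_1 \comdots f_n))$) splits $\varphi(e)$ into a Hilbert-function part in $\O_C$ and the global-section term $\sum_m h^0(C, \shS_e(m))$. Take $e_0$ large enough that the Harder-Narasimhan filtration of $\shS_{e_0}$ is strong, with strongly semistable quotients $\shQ_j$ of rank $r_j$ and slope $\mu_j$. For $e \geq e_0$ the strong HN quotients of $\shS_e$ are $F^{(e-e_0)*}(\shQ_j)$, of slope $p^{e-e_0}\mu_j$. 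Outside a bounded window of $m$ around each critical value $m_j(e) = -p^{e-e_0}\mu_j/\deg(C) = \nu_j p^e$, every twisted strongly semistable quotient has slope outside $[0, 2g-2]$, so its $h^0$ is determined by Riemann-Roch (or is zero). Combining these predictable contributions with the $\O_C$-part reproduces $e_{\rm HK}(I) p^{2e}$ (and absorbs all other $q$-polynomial contributions), as in Theorem~\ref{hkformula}.

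The genuine remainder $\gamma(e)$ comes from the $O(1)$-many values of $m$ inside each transition window around $m_j(e)$, where the relevant strongly semistable subquotient of $\shS_e(m)$ has slope in the sensitive range $[0, 2g-2]$ and $h^0$ depends on the isomorphism class of $\shS_e$, not just on numerical invariants. Here the finite-field hypothesis enters exactly as in the proof of Theorem~\ref{tantalizingtheorem}. Since $R$ and the $f_i$ are defined over some $\FF_{q_0} \subseteq K$, each $F^{(e-e_0)*}(\shQ_j)$ is an $\FF_{q_0}$-bundle; tensoring with a line bundle of degree $\lfloor p^{e-e_0}\mu_j \rfloor$ (chosen over $\FF_{q_0}$) normalizes its degree into a fixed range, placing it in the $\FF_{q_0}$-points of one of finitely many moduli spaces of semistable bundles of bounded rank and degree. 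Each such moduli space has only finitely many $\FF_{q_0}$-points, so the normalized isomorphism types form a finite set on which Frobenius acts; eventual periodicity in $e$ follows by pigeonhole.

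To pass from periodicity of the quotients to periodicity of $\shS_e$ itself, one processes the successive extensions in the strong HN filtration one at a time: the $\mathrm{Ext}^1$-classes governing them also form bounded $\FF_{q_0}$-families, so the normalized extension data take only finitely many values and are eventually periodic as well. Since the cohomological contribution inside each transition window depends only on this combined datum (quotients together with normalized extension classes), its periodicity forces eventual periodicity of $\gamma(e)$. The main obstacle is exactly this lifting step: organizing quotients and Ext-classes jointly in a way that remains compatible with the line-bundle normalizations needed to stay in bounded moduli, and verifying that the ``sensitive'' cohomology in each transition window depends only on this joint isomorphism datum uniformly in $e$.
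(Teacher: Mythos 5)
Your decomposition of $\varphi(e)$, the use of the strong Harder--Narasimhan filtration, the transition-window picture around the critical degrees $m_j(e)=\nu_j p^e$, and the reliance on boundedness of semistable bundles over a finite field to force an eventual repetition via pigeonhole are all the right ingredients and do track the structure of the argument in \cite{brennerhilbertkunzfunction}. (One small point of language: the finiteness you want comes from boundedness---a Quot-scheme of quasi-projective type has finitely many $\FF_{q_0}$-points---rather than from the coarse moduli space, whose points only record $S$-equivalence classes; the periodicity claim is about actual isomorphism classes.)

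The genuine gap is the final step you flag as the ``main obstacle,'' namely the attempt to lift eventual periodicity from the HN quotients to $\shS_e$ itself by tracking $\operatorname{Ext}^1$-classes. That obstacle does not exist, and trying to organize extension data alongside the line-bundle normalizations is neither needed nor, as you note, easy to carry out. The reason it is unnecessary is slope separation: for $e$ large, the slopes of the twisted quotients $\shQ_{e,k}(m)$ differ from the critical one by $p^{e-e_0}|\mu_k-\mu_j|$, which tends to infinity. So when $m$ lies in the transition window for the $j$-th quotient (twisted slope in $[0,2g-2]$), every quotient above $\shQ_{e,j}$ in the filtration has twisted slope $>2g-2$, forcing $h^1(\shS_{e,j-1}(m))=0$ up the filtration, and every quotient below has twisted slope $<0$, forcing $h^0\bigl((\shS_e/\shS_{e,j})(m)\bigr)=0$. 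The long exact sequences then give
$$h^0(\shS_e(m)) \;=\; \chi\bigl(\shS_{e,j-1}(m)\bigr) \;+\; h^0\bigl(\shQ_{e,j}(m)\bigr),$$
with the first term purely numerical (Riemann--Roch) and the second depending only on the isomorphism class of the single critical strongly semistable quotient. Thus the extension data are invisible to the cohomology in the transition windows, and your periodicity argument for the individual normalized quotients already closes the proof; no joint organization of quotients and $\operatorname{Ext}$-classes is required.
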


This theorem also shows that here the ``linear term'' in the
Hilbert-Kunz function exists and that it is zero. It was proved in
\cite{hunekemcdermottmonsky} that for normal excellent $R$ the
Hilbert-Kunz function looks like
$$e_{HK} q^{\dim (R)} + \beta q^{\dim
(R)-1} + \text{smaller terms} \, .$$
For possible behavior of the second term in the non-normal case in dimension two see
\cite{monskyirreducible}. See also Remark \ref{hilbertkunzarithmetic}.

\section{Arithmetic and geometric deformations of tight closure}
\label{deformation}

The geometric interpretation of tight closure theory led to a fairly
detailed understanding of tight closure in graded dimension two. The
next easiest case is to study how tight closure behaves in families
of two-dimensional rings, parametrized by a one dimensional ring.
Depending on whether the base has mixed characteristic (like
$\Spec \ZZ$) or equal positive characteristic $p$ (like $\Spec K[T]=\mathbb A^1_K$)
we talk about \emph{arithmetic} or \emph{geometric deformations}.

More precisely, let $D$ be a one-dimensional domain and let $S$ be a $D$-standard-graded domain
of dimension three, such that for every point $\fop \in \Spec D$ the
fiber rings $S_{\kappa(\fop)} = S \otimes_D \kappa(\fop)$ are normal
standard-graded domains over $\kappa(\fop)$ of dimension two. The
data
$I=(f_1 \comdots f_n)$ in
$S$ and $f \in S$ determine corresponding data in these fiber rings, and the syzygy bundle
$\Syz(f_1 \comdots f_n)$ on $\Proj S \rightarrow \Spec D$ determines syzygy bundles on the curves
$ C_{\kappa(\fop)} = \Proj S_{\kappa(\fop)} $.
The natural questions here are: how does the property $f \in I^*$ (in $S_{\kappa(\fop)}$) depend on $\fop$, how does $e_{\rm HK}(I)$ depend on $\fop$, how does
strong semistability depend on $\fop$, how does the affineness of
torsors depend on $\fop$?

Semistability itself is an open property and behaves nicely in a family in the sense that
if the syzygy bundle is semistable on the curve over the generic point, then it is semistable
over almost all closed points. D. Gieseker gave in \cite{giesekerfrobenius} an example of a collection of bundles
such that, depending on the parameter, the $e$th pull-back is
semistable, but the $(e+1)$th is not semistable anymore (for every
$e$). The problem how strong semistability behaves under arithmetic deformations was explicitly
formulated by Y. Miyaoka and by N. Shepherd-Barron (\cite{miyaokachern},
\cite{shepherdbarronsemistability}).

In the context of Hilbert-Kunz theory, this question has been
studied by P. Monsky (\cite{hanmonsky}, \cite{monskyfamilyquartic}), both in the
arithmetic and in the geometric case.
Monsky (and Han) gave examples that the Hilbert-Kunz multiplicity may vary in a family.

\begin{example}
Let $R_p= \ZZ/(p)[X,Y,Z]/(X^4+Y^4+Z^4)$. Then the Hilbert-Kunz multiplicity of the maximal ideal
is
$$e_{\rm HK}(R_p)= \begin{cases} 3 \text{ for } p
= \pm 1 \mod 8 \cr 3 + 1/p^2 \text{ for } p = \pm 3 \mod 8 \end{cases} \, .$$
Note that by the theorem on prime numbers in arithmetic progressions there exist
infinitely many prime numbers of all these congruence types.
\end{example}

\begin{remark}
\label{hilbertkunzarithmetic}
In the previous example there occur infinitely many different values
for $e_{\rm HK}(R_p)$ depending on the characteristic, the limit as
$p \mapsto \infty$ is however well defined. Trivedi showed
\cite{trivedihilbertkunzreduction} that in the graded two-dimensional situation this
limit always exists, and that this limit can be computed by the
Harder-Narasimhan filtration of the syzygy bundle in characteristic
zero. Brenner showed that one can define, using this
Harder-Narasimhan filtration, a Hilbert-Kunz multiplicity directly
in characteristic zero, and that this Hilbert-Kunz multiplicity
characterizes solid closure \cite{brennerhilbertkunzcriterion} in the
same way as Hilbert-Kunz multiplicity characterizes tight closure
in positive characteristic (Theorem \ref{hilbertkunztightclosure}
above). Combining these results one can say that ``solid closure is
the limit of tight closure'' in graded dimension two, in the sense
that $f \in I^{\solidclosure}$ in characteristic zero if and only if the
Hilbert-Kunz difference $e_{\rm HK} ((I,f)) - e_{\rm HK} (I)$ tends
to $0$ for $p \mapsto \infty $.

It is an open question whether in all dimensions the Hilbert-Kunz
multiplicity has always a limit as $p$ goes to infinity, whether
this limit, if it exists, has an interpretation in characteristic
zero alone (independent of reduction to positive characteristic) and
what closure operation it would correspond to.
See also \cite{brennerlimiller}.
\end{remark}

In the geometric case, Monsky gave the following example (\cite{monskyfamilyquartic}).

\begin{example}
\label{monskyquartic}
Let $K=\ZZ/(2)$ and let
$$S= \ZZ/(2)[T][X,Y,Z]/(Z^4+Z^2XY+Z(X^3+Y^3)+(T+T^2)X^2Y^2 ) \, .$$
We consider $S$ as an algebra over $\ZZ/(2)[T]$ ($T$ has degree $0$).
Then the Hilbert-Kunz multiplicity of the maximal ideal is
$$e_{\rm HK}\! (S_{\kappa (\fop)})
\! = \! \begin{cases} 3 \text{ if } \kappa (\fop) =K(T) \text{ (generic case)}\cr
3 + 1/4^m \text{ if } \kappa(\fop)= \ZZ/(2)(t) \text{ is finite over
}
\ZZ/(2)
\text{ of degree } m . \end{cases} $$
\end{example}

By the work of
Brenner and Trivedi (see Section \ref{hilbertkunz})
these examples can be translated immediately into examples where
strong semistability behaves weirdly. From the first example we get
an example of a vector bundle of rank two over a projective relative
curve over $\Spec \ZZ$ such that the bundle is semistable on the
generic curve (in characteristic zero), and is strongly semistable
for infinitely many prime reductions, but also not strongly
semi\-stable for infinitely many prime reductions.

From the second example we get an example of a vector bundle of rank two over a projective
relative curve over the affine line $\mathbb A^1_{\ZZ/(2)}$, such
that the bundle is strongly semistable on the generic curve (over
the function field $
\ZZ/(2) (T)$), but not strongly semistable for the curve over any
finite field (and the degree of the field extension determines which
Frobenius pull-back destabilizes).

To get examples where tight closure varies with the base one has to go one step further
(in short, weird behavior of Hilbert-Kunz multiplicity is a necessary condition for weird behavior of tight closure). Interesting
behavior can only happen for elements of degree $(\sum d_i)/(n-1)$
(the degree bound, see Theorems \ref{ssinclusion} and \ref{ssexclusion}).

In \cite{brennerkatzmanarithmetic}, Brenner and M. Katzman showed
that tight closure does not behave uniformly under an arithmetic
deformation, thus answering negatively a question in
\cite{hochstersolid}.

\begin{example}
\label{brennerkatzmanexample}
Let $$R=\ZZ/(p)[X,Y,Z]/(X^7+Y^7+Z^7)$$ and $I=(X^4,Y^4,Z^4)$, $f=X^3Y^3$. Then
$f \in I^*$ for $p=3 \mod 7$ and $f \not \in I^*$ for $p=2 \mod 7$
(see \cite[Proposition 2.4 and Proposition 3.1]{brennerkatzmanarithmetic}). Hence we have infinitely many prime reductions
where the element belongs to the tight closure and infinitely many prime reductions
where it does not.
\end{example}

\begin{remark}
Arithmetic deformations are closely related to the question ``what is tight closure over a field of characteristic zero''. The general philosophy is that characteristic zero behavior of tight closure should reflect
the behavior of tight closure for almost all primes, after
expressing the relevant data over an arithmetic base. By declaring
$f \in I^*$, if this holds for almost all primes, one obtains a
satisfactory theory of tight closure in characteristic zero with the
same impact as in positive characteristic. This is a systematic way
to do reduction to positive characteristic (see
\cite[Appendix 1]{hunekeapplication} and \cite{hochsterhuneketightclosurezero}).
However, the example above shows that there is not always a uniform
behavior in positive characteristic. A consequence is also that
solid closure in characteristic zero is not the same as tight
closure (but see Remark \ref{hilbertkunzarithmetic}). From the
example we can deduce that
$f
\in I^{\solidclosure}$, but $f \not\in I^*$ in $\QQ[X,Y,Z]/(X^7+Y^7+Z^7)$. Hence,
the search for a good tight closure operation in characteristic zero
remains.
\end{remark}

We now look at geometric deformations. They are directly related to the localization problem and to the tantalizing problem which we have mentioned in the introduction.

\begin{lemma}
Let $D$ be a one-dimensional domain of finite type over $\ZZ/(p)$ and let $S$ be a
$D$-domain of finite type.
Let $f\in S$ and $I \subseteq S$ be an ideal. Suppose that localization holds for $S$. If then $f \in I^*$ in the generic fiber ring $S_{Q(D)}$, then also $f \in I^*$ in $S_{\kappa(\fop)} = S \otimes_D \kappa(\fop)$ for almost all closed points $\fop \in \Spec D$.
\end{lemma}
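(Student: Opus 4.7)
The plan is to use the localization hypothesis to descend from $S_{Q(D)}$ to a containment over $S$ itself, and then reduce modulo $\fop$ for almost all closed points. Set $T = D \setminus \{0\}$, viewed as a multiplicative system in the $D$-algebra $S$. Then $S_T = S \otimes_D Q(D) = S_{Q(D)}$ and $IS_T = IS_{Q(D)}$, so the hypothesis $f \in (IS_{Q(D)})^*$ combined with localization holding for $S$ yields
$$ f \in (IS_T)^* = (I^*)S_T . $$
Clearing the denominator, which lies in $T \subseteq D$ and is a nonzerodivisor on the domain $S$, produces $s \in D$, $s \neq 0$, with $sf \in I^*$ in $S$. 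Unpacking the definition of tight closure in the domain $S$ furnishes a nonzero $c \in S$ and an integer $e_0$ such that
$$ c \, s^q f^q \in I^{[q]} \ \text{in } S \ \text{for every } q = p^e,\ e \geq e_0 . $$

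Next, restrict to closed points $\fop \in \Spec D$ satisfying (i) $s \notin \fop$ and (ii) the image $\bar c$ of $c$ in $S_{\kappa(\fop)}$ is nonzero. Condition (i) excludes only the finitely many closed points of the one-dimensional domain $D$ containing the nonzero element $s$. For (ii), the set of closed $\fop$ violating it is the image in $\Spec D$ of the closed subset $V(c) \subseteq \Spec S$ under the finite-type morphism $\Spec S \to \Spec D$; since $c \neq 0$ already in the generic fiber $S_{Q(D)}$, this image is a proper constructible subset of $\Spec D$, and therefore meets the closed points in a finite set. Hence (i) and (ii) both hold for almost all closed $\fop$.

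For such $\fop$, the element $\bar s$ is a unit in $\kappa(\fop)$ and hence in $S_{\kappa(\fop)}$. Reducing the second displayed containment modulo $\fop$ and absorbing the unit $\bar s^q$ gives $\bar c \, \bar f^q \in \bar I^{[q]}$ in $S_{\kappa(\fop)}$ for every $q \geq p^{e_0}$. Because $S_{\kappa(\fop)}$ is a domain by the standing assumption on the fibers, $\bar c \neq 0$ means $\bar c$ lies in no minimal prime, so this is exactly the tight closure condition witnessing $\bar f \in \bar I^*$ in $S_{\kappa(\fop)}$. The only step beyond formal bookkeeping is (ii): ensuring that the chosen test element $c$ survives as a nonzero element in almost all fibers. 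This is a standard generic flatness / constructibility argument for finitely generated algebras over the one-dimensional base $D$, and it is the point at which the finite-type hypotheses on $D$ and $S$ enter essentially.
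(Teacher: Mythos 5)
Your opening move is the same as the paper's: localize to write $S_{Q(D)} = S_T$ with $T = D \setminus\{0\}$, use the localization hypothesis to get $f \in (I^*)S_T$, and clear a single denominator $s \in D \setminus\{0\}$ to obtain $sf \in I^*$ in $S$. From there the paper simply invokes the persistence of tight closure (Theorem 2.3 in Huneke's notes) applied to $S \to S_{\kappa(\fop)}$, which gives $sf \in (IS_{\kappa(\fop)})^*$ for every $\fop$ at once, and then notes $s$ is a unit in almost all $\kappa(\fop)$. You instead unwind the definition of $I^*$ in $S$, extract a test element $c$, and try to push $c$ down to the fibers by hand. This is a legitimate alternative strategy, but your argument for step (ii) has a genuine gap.

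You claim that the set of closed $\fop$ with $\bar c = 0$ in $S_{\kappa(\fop)}$ is the image of $V(c) \subseteq \Spec S$ in $\Spec D$ and that this image is proper because $c \neq 0$ in $S_{Q(D)}$. Neither assertion is right. The condition $\bar c = 0$ means $c$ vanishes on the \emph{entire} fiber, i.e.\ $c \in \fop S$, whereas $\fop$ lies in the image of $V(c)$ as soon as $c$ vanishes at \emph{some} point of the fiber, i.e.\ as soon as $\bar c$ fails to be a unit in $S_{\kappa(\fop)}$. These are very different conditions; the former set is contained in the latter, but it is the former you need to control. Moreover, the image of $V(c)$ typically \emph{does} contain the generic point of $\Spec D$: as long as $c$ is a nonunit in the domain $S_{Q(D)}$ it lies in some prime of the generic fiber, so the image of $V(c)$ is dense and your properness claim fails. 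The constructibility argument, as written, does not close the gap.

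The correct way to rescue your approach is Grothendieck's generic flatness. Apply it to the finite type $D$-algebra $S/(c)$: there is a nonzero $d \in D$ such that $S/(c)$ is flat (indeed free) over $D_d$. For $\fop$ in $D(d)$, tensoring the short exact sequence $0 \to S \stackrel{c}{\to} S \to S/(c) \to 0$ with $\kappa(\fop)$ stays exact because the cokernel is flat, so multiplication by $\bar c$ is \emph{injective} on $S_{\kappa(\fop)}$. In particular $\bar c$ is a nonzerodivisor, hence lies in no associated prime and a fortiori in no minimal prime of $S_{\kappa(\fop)}$. This simultaneously repairs the almost-all claim and removes your reliance on the fibers being domains — an assumption that appears in the paper's ambient setup but is not part of the lemma's stated hypotheses, and which the paper's own proof via persistence also does not need. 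With this replacement your proof becomes a correct, more hands-on alternative to the paper's appeal to the persistence theorem.
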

\begin{proof}
The generic fiber ring is the localization of $S$ at the multiplicative system $M =D -\{0\}$ (considered in $S$). So if $f \in I^*$  holds in $S_{Q(D)}=S_M$, and if localization holds, then there exists $h \in M $ such that $hf \in I^*$ in $S$ (the global ring of the deformation).
By the persistence of tight closure (\cite[Theorem 2.3]{hunekeapplication} applied to $S \rightarrow
S_{\kappa(\fop)}$)
it follows that $hf \in I^*$ in $S_{\kappa(\fop)}$ for all closed points $\fop \in \Spec D$.
But $h$ is a unit in almost all residue class fields $\kappa(\fop)$, so the result follows.
\end{proof}

\begin{example}
\label{brennermonskyexample}
Let $$S=\ZZ/(2)[T][X,Y,Z]/(Z^4+Z^2XY+Z(X^3+Y^3)+(T+T^2)X^2Y^2)$$ as in Example
\ref{monskyquartic} and let $I=(X^4,Y^4,Z^4)$, $f=Y^3Z^3$ ($X^3Y^3$ would not work).
Then $f \in I^*$, as is shown in \cite{brennermonsky}, in the generic fiber ring
$S_{\ZZ/(2)(T)}$, but $f \not\in I^*$ in $S_{\kappa(\fop)}$ for all closed points
$\fop \in \Spec D$.
Hence tight closure does not commute with localization.
\end{example}

\begin{example}
Let $K=\ZZ/(2)(T)$ and  $R=K[X,Y,Z]/(Z^4+Z^2XY+Z(X^3+Y^3)+(T+T^2)X^2Y^2)$.
This is the generic fiber ring of the previous example. It is a
normal, standard-graded domain of dimension two and it is defined
over the function field.
In this ring we have $Y^3Z^3 \in (X^4,Y^4,Z^4)^*$, but $Y^3Z^3 \not\in (X^4,Y^4,Z^4)^+ $. Hence tight closure is not the same as plus closure, not even in dimension two.
\end{example}

\section{Some open problems}
\label{problems}

We collect some open questions and problems, together with some comments of what
is known and some guesses. We first list problems which are weaker variants of the localization problem.

\begin{question}
Is $F$-regular the same as weakly $F$-regular?
\end{question}

Recall that a ring is called \emph{weakly $F$-regular} if every
ideal is tightly closed, and \emph{$F$-regular} if this is true for
all localizations. A positive answer would have followed from a
positive answer to the localization problem. This path is not
possible anymore, but there are many positive results on this: it is
true in the Gorenstein case, in the graded case (\cite{lyubezniksmithstrongweak}), it is true over an uncountable field
(proved by Murthy, see \cite[Theorem 12.2]{hunekeapplication}). All
this shows that a positive result is likely, at least under some
additional assumptions.

\begin{question}
Does tight closure commute with the localization at one element?
\end{question}

There is no evidence why this should be true. It would be nice to
see a counterexample, and it would also be nice to have examples of
bad behavior of tight closure under geometric deformations in all
characteristics.

\begin{question}
Suppose $R$ is of finite type over a finite field. Is tight closure
the same as plus closure?
\end{question}

This is known in graded dimension two for $R_+$-primary
ideals by Theorem \ref{tantalizingtheorem} and the extension for
non-homogeneous ideals (but still graded ring) by Dietz (see \cite{dietztightclosure}). To attack
this problem one probably needs first to establish new exact
criteria of what tight closure is. Even in dimension two, but not graded,
the best way to establish results is probably to develop a theory of strongly
semistable modules on a local ring.

Can one characterize the rings where tight closure is plus closure? Are rings, where every ideal coincides with its plus closure, $F$-regular?

\medskip
For a two-dimensional standard-graded domain and the corresponding projective curve, the following problems remain.

\begin{question}
Let $C$ be a smooth projective curve over a field of positive
characteristic, and let $\shL$ be an invertible sheaf of degree
zero. Let $c \in H^1(C, \shL)$ be a cohomology class. Does there
exist a finite mapping $C' \rightarrow C$, $C'$ another projective
curve, such that the pull-back annihilates $c$.
\end{question}

This is known for the structure sheaf $\O_C$ and holds in general over (the
algebraic closure of) a finite field. It is probably not true over a
field with transcendental elements, the heuristic being that
otherwise there would be a uniform way to annihilate the class over
every finite field (an analogue is that every invertible sheaf of degree zero over a
finite field has finite order in $\operatorname{Pic}^0(C)$, but the orders do not have much in
common as the field varies, and the order over larger fields might
be infinite).

\begin{question}
Let $R$ be a two-dimensional normal standard-graded domain and let $I$ be an
$R_+$-primary homogeneous ideal. Write $\varphi(e)= e_{HK} p^2 + \gamma(e)$. Is $\gamma(e)$ eventually periodic?
\end{question}

By Theorem \ref{hilbertkunzperiodic} this is true if the base field is finite, but this question is open if the base field contains transcendental elements. How does (the lowest term of) the  Hilbert-Kunz function behave under a geometric deformation?

\begin{question}
Let $C \rightarrow \Spec D$ be a relative projective curve over an arithmetic base like $\Spec \ZZ$, and let
$\shS$ be a vector bundle over $C$. Suppose that the generic bundle $\shS_0$ over the generic curve of characteristic zero is semistable. Is then $\shS_p$ over $C_p$ strongly semistable for infinitely many prime numbers $p$?
\end{question}

This question was first asked by Y. Miyaoka (\cite{miyaokachern}). Corresponding questions for an arithmetic family of two-dimensional rings are: Does there exist always infinitely many prime numbers where the Hilbert-Kunz multiplicity coincides with the characteristic zero limit? If an element belongs to the solid closure in characteristic zero, does it belong to the tight closure for infinitely many prime reductions? In \cite{brennermiyaoka}, there is a series of examples where the number of primes with not strongly semistable reduction has an arbitrary small density under the assumption that there exist infinitely many Sophie Germain prime numbers (a prime number $p$ such that also $2p+1$ is prime).

\medskip
We come back to arbitrary dimension.

\begin{question}
Understand tight closure geometrically, say for standard-graded
normal domains with an isolated singularity. The same for
Hilbert-Kunz theory.
\end{question}

Some progress in this direction has been made in \cite{brennerlinearfrobenius} and in
\cite{brennerfischbacherweitzgenericbounds}, but much more has to be done. What
is apparent from this work is that positivity properties of the
top-dimensional syzygy bundle coming from a resolution are
important. A problem is that strong semistability controls global
sections and by Serre duality also top-dimensional cohomology, but
one problem is to control the intermediate cohomology.

\begin{question}
Find a good closure operation in equal characteristic zero, with tight
closure like properties, with no reduction to positive
characteristic.
\end{question}

The notion of \emph{parasolid closure} gives a first answer to
this \cite{brennerparasolid}. However, not much is known about it
beside that it fulfills the basic properties one expects from tight
closure, and many proofs depend on positive characteristic (though
the notion itself does not). Is there a more workable notion?

One should definitely try to understand here several candidates with the help of
forcing algebras and the corresponding Grothendieck topologies. A
promising approach is to allow the forcing algebras as coverings
which do not annihilate (top-dimensional) local cohomology unless it
is annihilated by a resolution of singularities.

Is there a closure operation which commutes with localization
(this is also not known for characteristic zero tight closure, but probably false)?

\begin{question}
Find a good closure operation in mixed characteristic and prove the
remaining homological conjectures.
\end{question}

\medskip
In Hilbert-Kunz theory, the following questions are still open.

\begin{question}
Is the Hilbert-Kunz multiplicity always a rational number? Is it at least an
algebraic number?
\end{question}

The answer to the first question is probably no,
as the numerical material in \cite{monskylikelycounterexample} suggests. However, this
still has to be established.

\begin{question}
Prove or disprove that the Hilbert-Kunz multiplicity has always a
limit as the characteristic tends to $\infty$.
\end{question}

If it has, or in the cases where it has, one should also find a direct interpretation in characteristic zero and study the corresponding closure operation.

\bibliographystyle{amsplain}

\bibliography{bibliothek}

\end{document}